\documentclass[12pt]{article}
\usepackage{a4}
\usepackage{amsthm}
\usepackage{amsfonts}
\usepackage{amssymb}
\usepackage{amsmath}
\usepackage{cite}
\usepackage{epsfig}
\usepackage[hidelinks]{hyperref}

\newtheorem{theorem}{Theorem}
\newtheorem{lemma}[theorem]{Lemma}
\newtheorem{corollary}[theorem]{Corollary}
\newtheorem{proposition}[theorem]{Proposition}

\newtheorem{problem}{Problem}

\newcommand\NN{{\mathbb N}}
\newcommand\ZZ{{\mathbb Z}}
\newcommand\RR{{\mathbb R}}
\newcommand{\dd}{\,\mathrm{d}}

\begin{document}
\title{Sidorenko property and forcing in regular tournaments\thanks{The first and third authors were supported by the Alexander von Humboldt Foundation in the framework of the Alexander von Humboldt Professorship of the first author endowed by the Federal Ministry of Education and Research.}}
\author{Daniel Kr\'al'\thanks{Institute of Mathematics, Leipzig University, Augustusplatz 10, 04109 Leipzig, and Max Planck Institute for Mathematics in the Sciences, Inselstra{\ss}e 22, 04103 Leipzig, Germany. E-mail: {\tt daniel.kral@uni-leipzig.de}.}\and
        Matja\v{z} Krnc\thanks{Faculty of Mathematics, Natural Sciences and Information Technologies, and Andrej Maru\v{s}i\v{c} Institute, University of Primorska, Glagolja\v ska 8, SI-6000 Koper, Slovenia. E-mail: \texttt{matjaz.krnc@upr.si}. This author was supported by Slovenian Research and Innovation Agency (P1-0383 and N1-0370).}\and
        Filip Ku\v{c}er\'ak\thanks{Institute of Mathematics, Leipzig University, Augustusplatz 10, 04109 Leipzig. E-mail: {\tt filip.kucerak@mis.mpg.de}.}\and
	Bernard Lidick\'y\thanks{Department of Mathematics, Iowa State University, 411 Morrill Road, Ames, IA, 50011, USA. E-mail: \texttt{lidicky@iastate.edu}. Research of this author is supported in part by NSF grant FRG DMS-2152490, Simons Collaboration grant and a Scott Hanna professorship.}\and
        Jan Volec\thanks{Department of Theoretical Computer Science, Faculty of Information Technology, Czech Technical University in Prague, Th\'akurova 9, Prague, 160 00, Czech Republic. E-mail: \texttt{jan@ucw.cz}. This author was supported by the grant 23-06815M of the Grant Agency of the Czech Republic.}
}
\date{}
\maketitle

\begin{abstract}
We give a complete characterization of tournaments $H$ that have the Sidorenko property with respect to nearly regular tournaments,
i.e., the homomorphism density of $H$ among all nearly regular tournaments is minimized by a random tournament.
Corollaries of our result
are a positive answer to the question of Noel, Ranganathan and Simbaqueba
whether there exist infinitely many non-transitive tournaments that
are quasirandom forcing for nearly regular tournaments, and
a negative answer to their question
whether almost every tournament is quasirandom forcing for nearly regular tournaments.
\end{abstract}

\section{Introduction}
\label{sec:intro}

The work presented in this paper
is motivated by problems concerning quasirandomness of tournaments (orientations of complete graphs).
Informally speaking, a combinatorial structure is said to be \emph{quasirandom}
if it has properties that a random structure would have asymptotically almost surely.
The study of~\emph{quasirandom graphs} can be traced back
to the nowadays classical works of R\"odl~\cite{Rod86}, Thomason~\cite{Tho87,Tho87b} and
Chung, Graham and Wilson~\cite{ChuGW89} from 1980s.
The notion of quasirandom graphs is particularly robust as
there are seemingly different characterizations of quasirandom graphs, such as
through homomorphism counts, the distribution of edges, the cut sizes, algebraic properties, etc., and
so it has found applications in many different settings.
There is a long series of results concerning quasirandomness of other kinds of combinatorial structures,
for example
groups~\cite{Gow08},
hypergraphs~\cite{ChuG90,ChuG91s,Gow06,Gow07,HavT89,KohRS02,NagRS06,RodS04},
permutations~\cite{ChaKNPSV20,Coo04,KraLN24,KraP13},
tournaments~\cite{BucLSS21,ChuG91,CorR17,GrzIKK23,HanKKMPSV23,NoeRS25},
subsets of integers~\cite{ChuG92}, etc.

We are interested in \emph{quasirandom forcing} substructures.
We illustrate this property on quasirandom graphs, likely the most studied notion of quasirandom structures.
A graph $H$ is quasirandom forcing if the following holds for every sequence $(G_n)_{n\in\NN}$ of graphs:
the sequence $(G_n)_{n\in\NN}$ is quasirandom if and only if
the limit of the homomorphism density of $H$ in $(G_n)_{n\in\NN}$ is equal to the expected homomorphism density of $H$ in a random graph.
In other words, a graph is quasirandom if and only if the homomorphism density of $H$ is close to its expected density,
i.e., any non-randomness necessarily results in the deviation from the expected homomorphism density of $H$.
Examples of quasirandom forcing graphs include even cycles and complete bipartite graphs with each part of size at least two.

The concept of quasirandom forcing is intimately related to the notion of Sidorenko graphs.
A graph $H$ has the \emph{Sidorenko property}
if the homomorphism density of $H$ is asymptotically minimized by a random graph.
One of the most intriguing questions in extremal combinatorics is
a conjecture of Sidorenko~\cite{Sid93} and of Erd\H{o}s and Simonovits~\cite{ErdS83} that
asserts that every bipartite graph has the Sidorenko property;
we refer particularly to~\cite{BlaR65, Sid89, Sid91, ConFS10, ConL17, ConKLL18, ConL21}
for classes of bipartite graphs proven to have the Sidorenko property.
It is easy to show that every quasirandom forcing graph must have the Sidorenko property, and
the Forcing Conjecture of Conlon, Fox and Sudakov~\cite{ConFS10},
a well-known generalization of the above mentioned conjecture based on a question of Skokan and Thoma~\cite{SkoT04},
is equivalent to the statement that a graph $H$ is quasirandom forcing if and only if $H$ is bipartite and has at least one cycle.

\subsection{Quasirandomness in tournaments}

Our work is motivated by quasirandom forcing in the setting of tournaments.
It is well-known that transitive tournaments have the Sidorenko property,
i.e., their homomorphism density is asymptotically minimized by a random tournament, and
every transitive tournament with with at least four vertices
is quasirandom forcing, see~\cite{CorR17} and \cite[Exercise 10.44]{Lov93}.
Coregliano, Parente and Sato~\cite{CorPS19} identified a non-transitive quasirandom forcing tournament on five vertices,
which is depicted in Figure~\ref{fig:T5}.
In fact, this 5-vertex tournament has the \emph{anti-Sidorenko property},
i.e., its homomorphism density is asymptotically maximized by a random tournament.
Buci\'{c}, Long, Shapira and Sudakov~\cite{BucLSS21} observed that
there are no additional quasirandom forcing tournaments with seven or more vertices, and
the remaining tournaments on at most six vertices were analyzed by Hancock et al.~\cite{HanKKMPSV23}.
Hence, the transitive tournaments with at least four vertices and
the $5$-vertex tournament depicted in Figure~\ref{fig:T5} are the only quasirandom forcing tournaments,
which is in contrast with the setting of graphs
where very rich families of quasirandom forcing graphs are known~\cite{ConFS10, ConL17, ConKLL18, ConL21}.

\begin{figure}
\begin{center}
\epsfbox{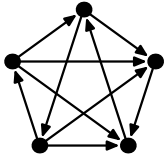}
\end{center}
\caption{The unique quasirandom forcing tournament that is not transitive.}
\label{fig:T5}
\end{figure}

Noel, Ranganathan and Simbaqueba~\cite{NoeRS25} considered quasirandom forcing in the setting of nearly regular tournaments.
A sequence $(T_n)_{n\in\NN}$ of tournaments is \emph{nearly regular}
if for every $\varepsilon>0$,
the proportion of vertices in $T_n$ whose out-degree differ from $|V(T_n)|/2$ by more than $\varepsilon|V(T_n)|$ tends to zero.
Clearly, if a tournament $H$ is quasirandom forcing for all sequences of tournaments,
then $H$ is quasirandom forcing for nearly regular sequences.
In the other direction, if $H$ is quasirandom forcing for nearly regular sequences of tournaments and
$H'$ is any tournament forcing near regularity (for example $H'$ can be chosen
to be the cyclically oriented triangle or the $3$-vertex transitive tournament),
then $\{H,H'\}$ is a quasirandom forcing family for all sequences of tournaments.

Noel, Ranganathan and Simbaqueba~\cite{NoeRS25} characterized all tournaments with at most five vertices that
are quasirandom forcing for nearly regular sequences of tournaments.
In particular, they identified three such additional $4$-vertex tournaments and five $5$-vertex tournaments.
Furthermore, they posed the following three open problems,
out of which we answer two of them and we resolve the remaining one up to finitely many cases.

\begin{problem}[{Noel, Ranganathan and Simbaqueba~\cite[Problem 6.1]{NoeRS25}}]
\label{prob:1}
Characterize tournaments that are quasirandom forcing for nearly regular sequences of tournaments.
\end{problem}

\begin{problem}[{Noel, Ranganathan and Simbaqueba~\cite[Question 6.2]{NoeRS25}}]
\label{prob:2}
Are there infinitely many non-transitive tournaments that are quasirandom forcing for nearly regular sequences of tournaments?
\end{problem}

\begin{problem}[{Noel, Ranganathan and Simbaqueba~\cite[Question 6.3]{NoeRS25}}]
\label{prob:3}
Is almost every tournament quasirandom forcing for nearly regular sequences of tournaments?
\end{problem}

\subsection{Our results}

The main result of this paper determines the complete list of tournaments that have the Sidorenko property for nearly regular sequences of tournaments.
In particular, it provides infinitely many such non-transitive tournaments, which answers the question given in Problem~\ref{prob:2} in the affirmative.
\begin{theorem}\label{thm:main}
Let $H$ be a non-transitive tournament.
The asymptotic density of $H$ in every nearly regular sequence of tournaments is at least $2^{-\binom{|V(H)|}{2}}$ 
if and only if
$H$ is isomorphic to $T[a,b,c]$ for some $a,b,c \in \NN$,
where $T[a,b,c]$ denotes the blow-up of the cyclically oriented triangle with the parts inducing transitive tournaments of orders $a$, $b$ and $c$, respectively.
Moreover, $T[a,b,c]$ is quasirandom forcing for nearly regular sequences of tournaments unless $a=b=c=1$.
\end{theorem}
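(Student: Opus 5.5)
We work in the language of tournament limits (tournamentons) $W:[0,1]^2\to[0,1]$ with $W(x,y)+W(y,x)=1$. Near regularity becomes the condition $\int W(x,y)\dd y=1/2$ for almost every $x$, and the random tournament is the constant $1/2$. Write $U=W-1/2$, which is antisymmetric with $\int U(x,y)\dd y=0$ for almost every $x$. For a tournament $H$ we expand
\[
t(H,W)=\int\prod_{(i,j)\in E(H)}\bigl(1/2+U(x_i,x_j)\bigr)
\]
as a sum over subsets $F\subseteq E(H)$ of $2^{-\binom{|V(H)|}{2}+|F|}\,t(F,U)$. The term $F=\emptyset$ gives the random value. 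Regularity kills every term in which $F$, viewed as a graph, has a vertex of degree one, since that vertex integrates to zero. Antisymmetry also controls sign patterns: reversing an edge of $F$ flips the sign. The proof therefore reduces to understanding the lowest-order surviving terms.

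\textbf{Sufficiency and forcing for $T[a,b,c]$.} Let $A,B,C$ be the parts, with edges $A\to B\to C\to A$ and each part transitive. Condition on the variables of one part and integrate the other two. Rather than expanding, I would proceed as follows. First take the case $a=b=c=1$: here $t(T[1,1,1],W)=1/8$ exactly for regular $W$, because the density of the cyclic triangle is determined by the out-degrees. For general $a,b,c$, first replace each transitive part by an independent set of the same size. The internal edges of a part contribute the factor $1/a!$ symmetrization, via the standard identity that a transitive tournament on $a$ vertices equals the average of orientations by ordering. Then
\[
t = 2^{-\binom a2-\binom b2-\binom c2}\int \bigl(\cdots\bigr),
\]
where the integrand is determined by
\[
f(x,y,z)=W(x,y)W(y,z)W(z,x)
\]
raised to a product structure. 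Concretely, the blow-up density equals $\int\!\!\int\!\!\int$ of a function of the form $g(y,z)^a$ after integrating out $A$, where $g(y,z)=\int W(z,x)W(x,y)\dd x$, weighted by $W(y,z)$ for each pair in $B\times C$. For $a\ge 2$ this is not immediately a power, so I would instead apply H\"older/Jensen iteratively in the style of Sidorenko proofs for complete bipartite graphs: integrate out $A$ given $B\cup C$ to get $\mathbb E_x[\prod_{b,c}\cdots]$, and so on. The intended chain is
\[
t(T[a,b,c],W)\ \ge\ 2^{-\binom{a+b+c}{2}+\dots}\,\bigl(8\,t(C_3,W)\bigr)^{\mathrm{const}}\ \ge\ 2^{-\binom{a+b+c}2}.
\]
Regularity is used only to assert $t(C_3,W)=1/8$. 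The forcing part follows by tracking equality cases in the convexity steps: equality forces $\int W(x,y)W(y,z)\dd y$ to be constant, i.e.\ $t(C_4\text{-type},W)$ to be random-like. This is the standard quasirandom characterization. It fails for $a=b=c=1$, where no inequality is used.

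\textbf{Necessity.} If $H$ is non-transitive and not of the form $T[a,b,c]$, we exhibit a regular $W=1/2+\varepsilon U$ with $t(H,W)<2^{-\binom{|V(H)|}{2}}$. The first surviving nontrivial terms in $\varepsilon$ correspond to subgraphs $F$ of minimum degree at least two. Short cycles (triangles) appear, and $t(C_3,U)$ is already $0$ in the regular case. I would therefore pick $U$ so that a specific low-order term, such as oriented $4$-cycles with alternating patterns, or a triangle in a blow-up (step-function) construction, is negative, possibly also using $\varepsilon$ non-small via explicit step tournamentons such as the ``circular'' tournamenton $W(x,y)=\mathbf 1[y-x\in(0,1/2)\bmod 1]$ or its perturbations.

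Structurally, a non-transitive $H$ not of the form $T[a,b,c]$ contains some specific small configuration, for instance a cyclic triangle plus a vertex dominating or dominated in a non-blow-up pattern, or two disjoint cyclic components in its strong-component order. I would do a case analysis on the condensation of $H$ into strong components, which is a transitive order of strong components, combined with the known $4$- and $5$-vertex results of Noel--Ranganathan--Simbaqueba to choose the witness construction.

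\textbf{Main obstacle.} I expect the necessity direction to be the hardest step: finding, for every $H$ outside the family, one explicit regular perturbation with negative leading coefficient. My first attempt would be the circular tournamenton mixed with $1/2$, computing the sign of the $\varepsilon^{k}$ term as a count of cyclic-triangle-type subconfigurations in $H$.
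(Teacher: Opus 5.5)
There are genuine gaps in both directions.

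\textbf{Sufficiency.} The core inequality is only a placeholder. The chain $t(T[a,b,c],W)\ge 2^{\cdots}(8\,t(C_3,W))^{\mathrm{const}}$ comes with no mechanism. Your ``concrete'' reduction is also wrong once $b+c\ge 3$: integrating out the part of size $a$ gives $\bigl(\int\prod_j W(x,y_j)\prod_k W(z_k,x)\dd x\bigr)^a$, which is a function of all of $y_{[b]},z_{[c]}$, not a product of $g(y,z)^a$. Most importantly, regularity is not used merely through $t(C_3,W)=1/8$. The paper rotates so that $a$ is smallest and $c\ge2$, and then combines three ingredients.
\begin{itemize}
\item[(i)] The conditional-Jensen bound of Lemma~\ref{lm:Cabk}, $t(C[a,b,c],W)\ge\int (N_c^+)^aD_c^{ab}(N_c^-)^b\dd x_{[c]}$.
\item[(ii)] H\"older's inequality comparing this with $t(C[1,1,c],W)=\int N_c^+D_cN_c^-$, $t(B[c],W)=\int N_c^+N_c^-$ and $\int N_c^+=2^{-c}$.
\item[(iii)] The identity $t(C[1,1,c],W)=\frac12\,t(B[c],W)$. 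It holds because $G(x,y)=\int W(x,z)W(z,y)\dd z$ is symmetric when $W$ is regular.
\end{itemize}
Without (iii), $t(B[c],W)$ ends up in the denominator of the resulting lower bound, where the available estimate points the wrong way. With (iii), everything reduces to $t(B[c],W)=\int G^c\ge(\int G)^c=4^{-c}$ (Lemma~\ref{lm:forcing}). Equality in this Jensen step, for $c\ge2$, means $G\equiv1/4$, i.e.\ $\int W(x,z)W(y,z)\dd z\equiv1/4$, which is the Chung--Graham quasirandomness condition. That is the precise source of the equality case you gesture at.

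Separately, the transitive parts do not contribute through an identity, and your two factors disagree anyway ($1/a!\neq 2^{-\binom a2}$ for $a\ge3$). One only has the inequality $t(H',W)\ge 2^{-\binom{|A|}2}t(H,W)$ of Lemma~\ref{lm:trans}, obtained from the Sidorenko property of transitive tournaments applied conditionally on a twin class. This suffices, because strictness comes from the $C[a,b,c]$ step.

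\textbf{Necessity.} Your leading-order perturbation plan cannot work in general. For regular $W=\frac12+\varepsilon U$, all terms of order below $\varepsilon^4$ vanish; at order $3$ this is because $\mathrm{tr}(U^3)=0$ for antisymmetric $U$. The only surviving $4$-edge configurations are $4$-cycles, each contributing $\pm\mathrm{tr}(U^4)$, where $\mathrm{tr}(U^4)=\|U^2\|_2^2>0$. A direct count gives the $\varepsilon^4$ coefficient
\[2^{-\binom n2+4}\,\mathrm{tr}(U^4)\,\bigl(n_{T_4}+n_{Q_4}-3n_{W_4}-3n_{L_4}\bigr),\]
where $Q_4$ is the strongly connected $4$-vertex tournament and $n_X$ counts $4$-subsets of $H$ inducing $X$. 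All five $4$-subsets of $C_5$ induce $Q_4$. So for every fixed regular direction $U\neq0$, $t(C_5,\frac12+\varepsilon U)>2^{-10}$ for all small $\varepsilon$, and no local witness exists. The same happens for some tournaments containing $W_4$, e.g.\ a source added to $T[1,1,3]$.

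The missing idea is that one can make the density exactly zero. Lemma~\ref{lm:sub} shows, by partitioning the vertices around a cyclic triangle, that every non-transitive $H$ not isomorphic to any $T[a,b,c]$ contains $W_4$, $L_4$ or $C_5$. Since $t(H,W)\le t(H_0,W)$ for every subtournament $H_0$, it suffices to kill these three.
\begin{itemize}
\item The circular tournamenton itself has zero density of $W_4$ and $L_4$, because every out- and in-neighbourhood induces a transitive tournament.
\item It does contain $C_5$, which is instead avoided by the limit of iterated blow-ups of the cyclic triangle.
\end{itemize}
Contrary to your assessment, this direction is short once these constructions are in hand; the real work is the sufficiency inequality.
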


A moment of thought reveals that every quasirandom forcing tournament must have either the Sidorenko property or the anti-Sidorenko property.
An analogous statement holds also when only nearly regular sequences of tournaments are considered (sc. Proposition~\ref{prop:SidAntiSid}).
Regarding tournaments with the anti-Sidorenko property in this regular setting, we show in Section~\ref{sec:antiSidorenko} that they can have at most nine vertices.
In particular, any tournament not captured by Theorem~\ref{thm:main} that is quasirandom forcing for nearly regular sequences of tournaments has at most nine vertices,
which answers the question in Problem~\ref{prob:3} in the negative, and solves Problem~\ref{prob:1} up to finitely many cases.

\begin{theorem}
\label{thm:max}
For every tournament $H$ with at least $10$ vertices there exists a nearly regular sequence of tournaments such that the asymptotic density of $H$ in the sequence 
is larger than $2^{-\binom{|V(H)|}{2}}$.
\end{theorem}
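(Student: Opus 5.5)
We plan a perturbation argument around the quasirandom limit. Work with tournamentons: a measurable $W:[0,1]^2\to[0,1]$ with $W(x,y)+W(y,x)=1$. Such a $W$ is the limit of a nearly regular sequence exactly when $\int W(x,y)\dd y=1/2$ for almost every $x$. Take $W=1/2+\varepsilon U$, where $U$ is antisymmetric ($U(x,y)=-U(y,x)$), bounded, and has $\int U(x,y)\dd y=0$ for every $x$. Then $W$ is a valid regular tournamenton for small $\varepsilon$, and a sequence of random tournaments sampled from $W$ is nearly regular.

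Expanding $t(H,W)$ in powers of $\varepsilon$ gives a sum over subsets $F\subseteq E(H)$ of terms $2^{-(m-|F|)}\varepsilon^{|F|}t(F,U)$, where $m=\binom{|V(H)|}{2}$ and $t(F,U)$ is the signed homomorphism density of the oriented subgraph $F$. Any $F$ with a vertex of degree one contributes zero, because of the zero row sums of $U$; single edges in particular vanish. The remaining contributions are as follows.

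\begin{itemize}
\item Two edges: these never form a subgraph without degree-one vertices.
\item Three edges: only triangles survive. Each triangle contributes $\pm t(C_3,U)$, with sign $+$ for a cyclic triangle of $H$ and $-$ for a transitive one. Writing $c(H)$ for the number of cyclic triangles of $H$, the cubic coefficient is $2^{-(m-3)}\bigl(2c(H)-\binom{|V(H)|}{3}\bigr)\,t(C_3,U)$.
\end{itemize}

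So if $2c(H)\neq\binom{|V(H)|}{3}$, we choose $U$ with $t(C_3,U)\neq0$ and the appropriate sign of $\varepsilon$; then $t(H,W)>2^{-m}$ for small $\varepsilon$. A suitable $U$ is the blow-up of a regular cyclic $3$-tournament with entries $\pm1$ on the cyclic parts and $0$ on the diagonal blocks; it has zero row sums and $t(C_3,U)\neq0$.

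This leaves the balanced case $2c(H)=\binom{|V(H)|}{3}$, where we go to order $\varepsilon^4$. The surviving $F$ are $4$-cycles on $4$ vertices, so we need the signed count of (not necessarily directed) $4$-cycles in $H$ with orientation weights. Each such term is $t(\vec C,U)$ for an oriented $4$-cycle. Because $U$ is antisymmetric, all these densities are $\pm\|U\|_{\square\text{-like}}$ equal to $\pm t(C_4,U)$, with the sign given by the parity of the reversed edges. Here $t(C_4,U)=\operatorname{tr}(U^4)\ge0$, and it is positive whenever $U\neq0$. We would then choose $U$ with $t(C_3,U)=0$ and $t(C_4,U)>0$, for instance a Paley-type or cyclic $5$-circulant antisymmetric kernel whose triangle density cancels. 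This reduces to showing that for $|V(H)|\ge10$ the signed $4$-cycle count $S(H)$ is positive whenever $c(H)$ is balanced.

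The main obstacle is this counting step. We would express $S(H)$ through the score sequence $(d_v)$ and the counts of $4$-vertex subtournament types. Using the identity $c(H)=\binom{n}{3}-\sum_v\binom{d_v}{2}$, balance forces $\sum_v (d_v-\tfrac{n-1}{2})^2$ to be a fixed quantity of order $n^3$. We expect $S(H)$ to be a positive multiple of $n^4$ plus a correction controlled by this variance, so it would exceed zero once $n\ge10$. If the sign turns out wrong or indefinite, we would first try adding a second-order correction via $\varepsilon^2$ combinations $U=U_1+\delta U_2$ to exploit mixed terms, before falling back to case-checking small $n$.
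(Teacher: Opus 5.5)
\noindent Your proposal has a genuine gap. Worse, the perturbative strategy cannot prove this theorem at all. The first problem is that the cubic term vanishes identically. For every antisymmetric $U$, renaming $(x,y,z)\mapsto(x,z,y)$ gives
\[t(C_3,U)=\int U(x,z)U(z,y)U(y,x)\dd x\dd y\dd z=-\int U(x,y)U(y,z)U(z,x)\dd x\dd y\dd z=-t(C_3,U),\]
so $t(C_3,U)=0$. For your $\pm1$ blow-up of the cyclic triangle, the two cyclic orders of the parts contribute $3/27$ and $-3/27$. This is the limit form of the fact that every regular tournamenton has $t(C[1,1,1],W)=1/8$ (Corollary~\ref{cor:triangle}). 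Moreover, your balanced case is empty for $n\ge 6$, since $c(H)\le n(n^2-1)/24<\frac12\binom{n}{3}$. So for every $H$ with at least $10$ vertices, your argument rests on the false claim that some $U$ has $t(C_3,U)\neq 0$. Everything is therefore pushed to order $\varepsilon^4$.

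Order $\varepsilon^4$ cannot rescue the argument either. As you note, the quartic coefficient is $2^{-(m-4)}S(H)\,t(C_4,U)$ with $t(C_4,U)>0$ for every $U\neq 0$, and this includes combinations $U_1+\delta U_2$. So the choice of $U$ is irrelevant and the sign is dictated by $S(H)$ alone. Summing $(-1)^r$ over the three $4$-cycles of a $4$-vertex set gives $+1$ for the transitive tournament $TT_4$ and for the strongly connected $4$-vertex tournament $SC_4$, and $-3$ for each of $W_4$ and $L_4$. Counting induced copies, this gives $S(H)=\#TT_4+\#SC_4-3(\#W_4+\#L_4)$.

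This quantity is negative for many $H$ with $n\ge 10$. Take the Paley tournament $QR_{11}$, which is doubly regular. Every out- and in-neighbourhood induces a regular $5$-vertex tournament with five cyclic and five transitive triangles. Hence $QR_{11}$ has $55$ induced copies each of $TT_4$, $W_4$ and $L_4$, and $165$ copies of $SC_4$, so $S=-110$. For such $H$, $t(H,1/2+\varepsilon U)<2^{-\binom{n}{2}}$ for every bounded $U\neq 0$ and all sufficiently small $\varepsilon\neq 0$. The constant tournamenton beats every point along every line through it, so no argument of this kind can succeed. Your heuristic that $S(H)$ is a positive multiple of $n^4$ is also false: $S(H)=o(n^4)$ for quasirandom $H$, and it can take either sign.

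What is needed is a regular tournamenton far from $1/2$ that exploits the superexponential decay of $2^{-\binom{n}{2}}$. The paper takes two copies of $H$ on parts $A_1,\dots,A_n$ and $B_1,\dots,B_n$, each of measure $1/(2n)$. Edges inside each copy follow $H$, edges between the copies are reversed, and $W=1/2$ on each $A_i\cup B_i$. The reversal makes every out-degree exactly $1/2$. Sending every $v_i$ into $A_i$, or every $v_i$ into $B_i$, makes the integrand equal to $1$. This gives $t(H,W)\ge 2(2n)^{-n}$, which exceeds $2^{-\binom{n}{2}}$ once $n\ge 10$.
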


\subsection{Sketch of the proof of Theorem~\ref{thm:main}}

Before proceeding with presenting our arguments, we would like to briefly highlight the main steps and ideas.
We treat the problem in the language of combinatorial limits, which we introduce in Section~\ref{sec:prelim};
we refer the reader to this section for notions used in this paragraph that has not yet been defined.
In Section~\ref{sec:constr},
we show that if $H$ is neither a transitive tournament nor a blow-up of the cyclically oriented triangle as in the characterization,
then there are regular tournaments that are $H$-free;
as discussed in the beginning of Section~\ref{sec:constr},
this statement also follows from the results presented in~\cite{BolH90}.

The core part of our proof of Theorem~\ref{thm:main} is presented in Section~\ref{sec:Sidorenko}.
An easy argument (formally given in Lemma~\ref{lm:trans}) yields that
it is enough to show that the blow-ups of the cyclically oriented triangle with independent parts, which are denoted by $C[a,b,c]$ (see Figure~\ref{fig:Cabc}),
are quasirandom forcing for nearly regular tournaments.

Let us now sketch the proof for the case $a=b$, which we believe to transparently capture the main idea.
A key observation is that $t(C[1,1,c],W)=t(B[c],W)/2$ for any regular tournamenton $W$
where $B[c]$ is the digraph obtained from $C[1,1,c]$
by removing the edge between the parts of size one (see Figure~\ref{fig:Bc}).
We next define three auxiliary functions (the formal definitions are given before Lemma~\ref{lm:Cabk}):
$N_{W,c}^{+}:[0,1]^c\to [0,1]$ that measures the size of the ``common out-neighborhood'' of a $c$-tuple of points,
$N_{W,c}^-:[0,1]^c\to [0,1]$ that measures the size of the ``common in-neighborhood'', and
$D_{W,c}:[0,1]^c\to [0,1]$ that measures the density of edges from the ``common out-neighborhood'' to the ``common in-neighborhood''.
We immediately obtain that
\begin{align*}
t(B[c],W) & =\int N_{W,c}^{-}(x)N_{W,c}^{+}(x)\dd x_{[c]}\qquad\mbox{and}\\
t(C[1,1,c],W) & =\int N_{W,c}^{-}(x)D_{W,c}(x)N_{W,c}^{+}(x)\dd x_{[c]}.
\end{align*}
On the other hand, the Sidorenko property of complete bipartite graphs oriented from one part to another yields that
\begin{equation}
t(C[a,a,c],W)\ge \int N_{W,c}^{-}(x)^aD_{W,c}(x)^{a^2}N_{W,c}^{+}(x)^{a}\dd x_{[c]}.
\label{eq:tCaac}
\end{equation}
Jensen's and H\"older's inequalities now imply that
\[t(C[1,1,c],W)\le t(B[c],W)^{\frac{a-1}{a}} t(C[a,a,c],W)^{\frac{1}{a^2}}.\]
In general, it is hard to bound $t(B[c],W)$ in estimates as the one above (an analogous issue has prevented us
from extending the entropy proof presented in Section~\ref{sec:concl} to all values of $a$, $b$ and $c$), however,
as we pointed out, it holds that $t(C[1,1,c],W)=t(B[c],W)/2$ for regular tournamentons $W$.
Hence, we obtain that
\[2^{-a^2}\;t(B[c],W)^a\le t(C[a,a,c],W);\]
this estimate yields the result as
$B[c]$ can be shown to have the Sidorenko property and to be quasirandom forcing by standard arguments.
The actual proof of Theorem~\ref{thm:Cabc} consists of
a generalization of the Sidorenko type inequality \eqref{eq:tCaac} and
careful applications of Jensen's Inequality and H\"older's Inequality.

\section{Preliminaries}
\label{sec:prelim}

In this section, we overview the notation used throughout the paper.
We first start with an overview of the general notation that we use and is less standard.
We write $[k]$ for the set $\{1,\ldots,k\}$.
We also use $\ZZ_k$ for the set $[k]$
when the additional algebraic structure given by the addition modulo $k$ is of importance.
In general, all integrals are over the space $[0,1]^k$ with Borel measure unless specified otherwise.
Finally, we write $x_A$ for $x\in\RR^A$, i.e., a vector whose coordinates are indexed by the elements of $A$.
Using the just introduced notation, $x_{[k]}$ is a vector $x\in\RR^k$ and
we will write $x_{[k]}$ instead of simply writing $x$
when we wish to emphasize the dimension of the vector $x$.
For example,
\[\int x_1x_2\dd x_{[2]}=\frac{1}{4}.\]

\begin{figure}
\begin{center}
\epsfbox{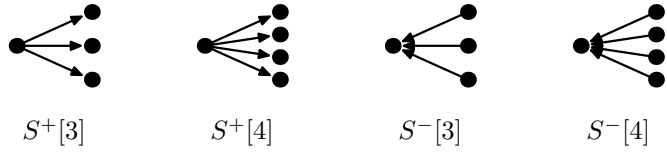}
\end{center}
\caption{The digraphs $S^+[3]$, $S^+[4]$, $S^-[3]$ and $S^-[4]$.}
\label{fig:Sk}
\end{figure}

\begin{figure}
\begin{center}
\epsfbox{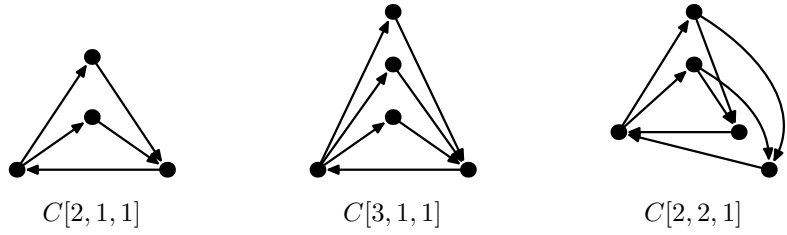}
\end{center}
\caption{The digraphs $C[2,1,1]$, $C[3,1,1]$ and $C[2,2,1]$,
         which are blow-ups of the cyclically oriented triangle with parts of the sizes given by the parameters.}
\label{fig:Cabc}
\end{figure}

\begin{figure}
\begin{center}
\epsfbox{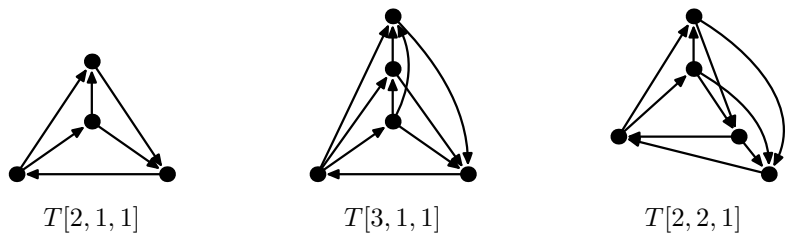}
\end{center}
\caption{The tournaments $T[2,1,1]$, $T[3,1,1]$ and $T[2,2,1]$.}
\label{fig:Tabc}
\end{figure}

We next introduce the notation related to digraphs and tournaments.
All digraphs considered in this paper are simple, i.e., without loops and parallel edges.
The vertices $u$ and $v$ of a digraph are \emph{twins}
if the out-neighbors of $u$ are exactly the out-neighbors of $v$,
the in-neighbors of $u$ are exactly the in-neighbors of $v$, and
there is no edge between $u$ and $v$.
We write $S^+[k]$ and $S^-[k]$, where $k\in\NN$, for the orientation of the $k$-leaf star such that the central vertex is the source and the sink, respectively; see Figure~\ref{fig:Sk} for illustrations.
Fix $a,b,c\in\NN$.
The digraph $C[a,b,c]$ is the blow-up of the cyclically oriented triangle with the parts of sizes $a$, $b$ and $c$ respectively,
i.e., $C[a,b,c]$ has three parts, one with $a$ vertices, one with $b$ vertices and one with $c$ vertices, and
it contains
all the edges between the $a$-vertex and the $b$-vertex parts directed to the $b$-vertex part,
all the edges between the $b$-vertex and the $c$-vertex parts directed to the $c$-vertex part, and
all the edges between the $a$-vertex and the $c$-vertex parts directed to the $a$-vertex part;
see Figure~\ref{fig:Cabc} for examples.
Note that all pairs of vertices contained in the same of the three parts of $C[a,b,c]$ are twins.
The tournament $T[a,b,c]$ is obtained from the digraph $C[a,b,c]$ by adding a transitive tournament
on the vertices of each of the three parts of the digraph $C[a,b,c]$;
see Figure~\ref{fig:Tabc} for examples.
Finally, recall the digraph $B[c]$ is obtained from $C[1,1,c]$
by removing the edge between the two vertices contained in the parts of size one; see Figure~\ref{fig:Bc}.
Note that $B[c]$ can be viewed as obtained from $c$ directed paths of length two
by identifying their first vertices to a single (source) vertex and their last vertices to a single (sink) vertex.

\begin{figure}
\begin{center}
\epsfbox{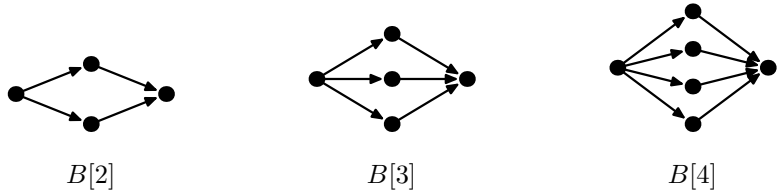}
\end{center}
\caption{The digraphs $B[2]$, $B[3]$ and $B[4]$.}
\label{fig:Bc}
\end{figure}

We next introduce the necessary concepts from the theory of combinatorial limits and
refer for a more thorough introduction to~\cite{HanKKMPSV23,NoeRS25},
where these tools were used in the context of quasirandom forcing tournaments.
A tournamenton is an analytic object that represents a convergent sequence of tournaments;
informally speaking, tournamentons can be thought of as adjacency matrices of large tournaments.
Formally, a \emph{tournamenton} is a measurable function $W:[0,1]^2\to [0,1]$
such that $W(x,y)+W(y,x)=1$ for all $(x,y)\in [0,1]^2$.
Quasirandom sequences of tournaments are represented by the constant tournamenton,
i.e., the tournament $W$ such that $W(x,y)=1/2$ for all $(x,y)\in [0,1]^2$.
Throughout the paper, we will use a shorthand notation $W\equiv 1/2$ to represent that
a tournamenton $W$ is equal to $1/2$ almost everywhere.
A tournamenton $W$ is \emph{regular} if
\[\int W(x,y)\dd y=\frac{1}{2}\]
for almost every $x\in [0,1]$;
loosely speaking, regular tournamentons represent large tournaments
where every vertex has asymptotically the same in-degree and out-degree.
Formally, regular tournamentons are limits of nearly regular convergent sequences of tournaments.

Let $H$ be a digraph. The \emph{homomorphism density} of $H$ in a tournamenton $W$,
denoted by $t(H,W)$ is defined as follows:
\begin{equation}
t(H,W)=\int\prod_{vw\in E(H)}W(x_v,x_w)\dd x_{V(H)}.
\label{eq:tHW}
\end{equation}
We write $t(H,1/2)$ for the homomorphism density of $H$ in the constant tournamenton.

We say that a digraph $H$ has the \emph{Sidorenko property} if $t(H,W) \ge 2^{-|E(H)|}$ for every tournamenton $W$, and
$H$ has the \emph{anti-Sidorenko property} if $t(H,W)\le 2^{-|E(H)|}$ for every tournamenton $W$.
Similarly,
a digraph $H$ has the \emph{Sidorenko property in regular tournamentons} if $t(H,W) \ge 2^{-|E(H)|}$ for every regular tournamenton $W$, and
$H$ has the \emph{anti-Sidorenko property in regular tournamentons} if $t(H,W)\le 2^{-|E(H)|}$ for every regular tournamenton $W$.
A digraph $H$ is \emph{quasirandom forcing} if a tournamenton $W$ satisfies that $t(H,W)=2^{-|E(H)|}$ if and only if $W\equiv 1/2$.
Similarly,  $H$ is \emph{quasirandom forcing in regular tournamentons}
if a regular tournamenton $W$ satisfies that $t(H,W)=2^{-|E(H)|}$ if and only if $W\equiv 1/2$.
As we mentioned, it is not hard to show that if $H$ is quasirandom forcing,
then $H$ has either the Sidorenko property or the anti-Sidorenko property, and
the same holds when restricted to the regular setting as given in Proposition~\ref{prop:SidAntiSid} below.

We next cast several classical results concerning tournaments in the language of combinatorial limits.
The first concerns the homomorphism density of transitive tournaments.

\begin{proposition}
\label{prop:trans}
Let $n\in\NN$ and let $T_n$ be the $n$-vertex transitive tournament.
For every tournamenton $W$, it holds that
\[t(T_n,W)\ge 2^{-\binom{n}{2}}.\]
Moreover, the equality holds if and only if
\begin{itemize}
\item $n\in\{1,2\}$ and $W$ is arbitrary,
\item $n=3$ and $W$ is regular, or
\item $n\ge 4$ and $W\equiv 1/2$.
\end{itemize}
\end{proposition}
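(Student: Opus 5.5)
We argue by induction on $n$, writing the density of $T_n$ through the source vertex. The cases $n\in\{1,2\}$ are trivial: $t(T_1,W)=1$, and $t(T_2,W)=\int W(x,y)\dd x\dd y=1/2$ for every $W$, because $W(x,y)+W(y,x)=1$.

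For the inductive step, let $d^+(x)=\int W(x,y)\dd y$ be the out-degree of $x$, and let $W_x$ be the tournamenton $W$ restricted to the out-neighbourhood of $x$ and renormalised. Formally, $W_x$ lives on the measure space $[0,1]$ with density $W(x,\cdot)/d^+(x)$; it is a tournamenton after a measure-preserving reparametrisation. Placing the source of $T_n$ at $x$ gives
\[t(T_n,W)=\int d^+(x)^{n-1}\,t(T_{n-1},W_x)\dd x .\]
The induction hypothesis gives $t(T_{n-1},W_x)\ge 2^{-\binom{n-1}{2}}$. Since $\int d^+(x)\dd x=1/2$, Jensen's inequality gives
\[\int d^+(x)^{n-1}\dd x\ge 2^{-(n-1)},\]
and combining the two yields $t(T_n,W)\ge 2^{-\binom{n}{2}}$.

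For $n=3$ this specialises to $t(T_3,W)=\int d^+(x)^2\dd x$, since every $W_x$ has $t(T_2,W_x)=1/2$. Here Jensen is tight exactly when $d^+\equiv 1/2$ almost everywhere, that is, when $W$ is regular.

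Now take $n\ge 4$. Equality forces both inequalities above to be tight. Jensen is tight only if $d^+\equiv1/2$, so $W$ is regular, and then $t(T_{n-1},W_x)=2^{-\binom{n-1}{2}}$ for almost every $x$. For $n=4$ this says every $W_x$ is regular. So the regular tournamenton $W$ has, for almost every $x$, out-neighbourhood weights $W(x,\cdot)$ under which the restricted tournamenton is again regular.

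The main obstacle is deducing $W\equiv 1/2$ from this condition; the measure-theoretic bookkeeping around $W_x$ is a secondary nuisance. My plan is to avoid the neighbourhood description and use a counting identity instead. I would compare $t(T_4,W)$ with the density of the $4$-vertex digraph in which one vertex $u$ points to two vertices $v,w$, both of which point to a fourth vertex $z$. Alternatively, I would express $t(T_4,W)-2^{-6}$ as a sum of squares involving the cut quantity $\int\bigl(W(x,y)-1/2\bigr)U(y,z)\dots$.

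A cleaner route I would try first is the following. In a regular $W$, averaging over the middle pair gives
\[t(T_4,W)=\int\bigl(\textstyle\int W(x,y)W(y,z)\dd y\bigr)^2\dots\]
More concretely, I would expand $t(T_4,W)$ with $W=1/2+U$, where $U$ is antisymmetric and $\int U(x,y)\dd y=0$. After regularity kills the linear and many cross terms, I expect the remainder to reduce to $c\,\|U\circ U\|_2^2$ (or $c\,t(C_4,U)$) plus odd terms. These odd terms should vanish by antisymmetry combined with regularity.

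Since $t(C_4,U)=\|U\|_{\mathrm{op}}$-type quantities vanish only when $U=0$, this gives $W\equiv1/2$. For $n\ge5$ the induction then applies: $W_x\equiv1/2$ on the out-neighbourhood for almost every $x$. Alternatively, the same expansion argument shows that $W\equiv 1/2$. I expect the sign bookkeeping in the $T_4$ expansion to be the step needing the most care.
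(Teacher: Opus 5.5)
The paper does not prove this proposition; it quotes it as a classical fact, with the forcing part referred to \cite{CorR17} and \cite[Exercise 10.44]{Lov93}. So your argument has to stand on its own. The inequality part does. The source-vertex decomposition $t(T_n,W)=\int d^+(x)^{n-1}t(T_{n-1},W_x)\dd x$ combined with Jensen is the standard proof, and the $n=3$ equality case is right, up to the slip that $t(T_3,W)=\frac12\int d^+(x)^2\dd x$.

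\textbf{The gap at $n=4$.} The genuine gap is the equality case for $n\ge 4$, which is the only substantive part of the statement. You correctly reduce $n=4$ to ``$W$ regular and $W_x$ regular for a.e.\ $x$''. After that you only list unexecuted alternatives, one with a truncated formula, and conclude with ``I expect''; nothing you wrote actually yields $W\equiv 1/2$.

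The neighbourhood route you abandoned finishes this quickly:
\begin{itemize}
\item With $d^+\equiv 1/2$, regularity of $W_x$ says precisely that $F(x,y)=\int W(x,z)W(y,z)\dd z=1/4$ for a.e.\ $(x,y)$ with $W(x,y)>0$.
\item Since $F$ is symmetric and $W(x,y)+W(y,x)=1$, this gives $F\equiv 1/4$ a.e.
\item Writing $W=1/2+U$ with $\int U(x,z)\dd z=0$, this reads $\int U(x,z)U(y,z)\dd z=0$ a.e. That is, $UU^{T}=0$ as an integral operator, hence $U=0$.
\end{itemize}
This is the Chung--Graham property invoked in Lemma~\ref{lm:forcing}.

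Your expansion route also works, but the sign bookkeeping you postponed is the whole point. For regular $W$:
\begin{itemize}
\item every term whose edge set has a vertex of degree one vanishes by regularity;
\item the remaining terms with $3$, $5$ or $6$ factors are negated by a suitable transposition of two vertices of $T_4$, so they vanish;
\item the three $4$-cycles enter with signs $-1,+1,+1$.
\end{itemize}
This gives
\[t(T_4,W)=2^{-6}+\frac14\int\Bigl(\int U(x,z)U(z,y)\dd z\Bigr)^2\dd x\dd y .\]
So the quantity controlling equality is $\|U^2\|_2^2=\|UU^{T}\|_2^2$, not an operator norm.

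\textbf{The gap for $n\ge 5$.} The sentence ``the induction then applies: $W_x\equiv 1/2$ for a.e.\ $x$'' also stops short. It only gives $W(y,z)=1/2$ for a.e.\ pair $(y,z)$ with $\int W(x,y)W(x,z)\dd x>0$. You never pass from this to a.e.\ pair in $[0,1]^2$, and the alternative ``the same expansion argument'' for $T_n$ is not carried out either.

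The simplest repair is to route everything through $T_4$. Equality for $T_n$ forces $W$ to be regular and $t(T_{n-1},W_x)=2^{-\binom{n-1}{2}}$ for a.e.\ $x$. Since $n-1\ge 3$, this makes $W_x$ regular, either by the Jensen step at level $n-1$ or by the induction hypothesis. Hence
\[t(T_4,W)=\int d^+(x)^3\,t(T_3,W_x)\dd x=2^{-3}\cdot 2^{-3}=2^{-6},\]
and the case $n=4$ yields $W\equiv 1/2$.
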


Since there are only two $3$-vertex tournaments,
which are the transitive $3$-vertex tournament and the cyclically oriented triangle,
we derive from Proposition~\ref{prop:trans} the following.

\begin{corollary}
\label{cor:triangle}
Every tournamenton $W$ satisfies that $t(C[1,1,1],W)\le 1/8$ and
the equality holds if and only if $W$ is regular.
\end{corollary}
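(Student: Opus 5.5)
The plan is to derive the corollary from Proposition~\ref{prop:trans} with $n=3$ by showing that the two $3$-vertex tournaments have complementary densities. Write $T_3$ for the transitive tournament on three vertices. The homomorphism densities of the two tournaments on three vertices, summed appropriately over labelings, account for all of the mass. More precisely, I would expand
\[1=\int\prod_{\{i,j\}\subseteq[3]}\bigl(W(x_i,x_j)+W(x_j,x_i)\bigr)\dd x_{[3]}.\]
The product has $8$ terms, one for each orientation of the triangle on the labeled vertex set $[3]$. Exactly $2$ of these orientations are cyclic, and each of them contributes $t(C[1,1,1],W)$. The remaining $6$ orientations are transitive, and each contributes $t(T_3,W)$. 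This gives the identity
\[6\,t(T_3,W)+2\,t(C[1,1,1],W)=1.\]

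Now apply Proposition~\ref{prop:trans} with $n=3$, which gives $t(T_3,W)\ge 1/8$. Substituting into the identity,
\[t(C[1,1,1],W)=\frac{1-6\,t(T_3,W)}{2}\le \frac{1-6/8}{2}=\frac{1}{8}.\]
Since the identity expresses $t(C[1,1,1],W)$ as a strictly decreasing affine function of $t(T_3,W)$, equality $t(C[1,1,1],W)=1/8$ holds if and only if $t(T_3,W)=1/8$. By the equality case of Proposition~\ref{prop:trans} for $n=3$, the latter holds if and only if $W$ is regular.

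The only step needing care is the bookkeeping in the expansion, namely the counts of $2$ cyclic and $6$ transitive orientations, and the fact that each labeled copy integrates to the same density by symmetry of the integration variables. Both are routine, so I do not expect a genuine obstacle.
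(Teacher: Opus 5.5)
Your proposal is correct and follows the paper's route: the paper derives the corollary directly from Proposition~\ref{prop:trans} with $n=3$, using only the remark that the transitive triangle and the cyclic triangle are the only two $3$-vertex tournaments. Your identity $6\,t(T_3,W)+2\,t(C[1,1,1],W)=1$ is the explicit form of that remark, and it gives both the bound and the equality case.
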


Let us also relate the quasirandom forcing property with the Sidorenko and the anti-Sidorenko properties for regular tournamentons.

\begin{proposition}
\label{prop:SidAntiSid}
If a digraph is quasirandom forcing in regular tournamentons, then it has either the Sidorenko property or the anti-Sidorenko property in regular tournamentons.
\end{proposition}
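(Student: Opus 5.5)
We argue by contraposition, using the fact that the set of regular tournamentons is convex and contains $W\equiv 1/2$. Suppose $H$ has neither property in regular tournamentons. Then there are regular tournamentons $W_1$ and $W_2$ with $t(H,W_1)<2^{-|E(H)|}<t(H,W_2)$. We will produce a regular tournamenton $W\not\equiv 1/2$ with $t(H,W)=2^{-|E(H)|}$, which shows that $H$ is not quasirandom forcing in regular tournamentons.

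First we replace $W_1$ and $W_2$ by tournamentons that are closer to the constant one. For $\lambda\in[0,1]$ and $i\in\{1,2\}$ set $W_i^\lambda=\lambda W_i+(1-\lambda)\cdot\frac12$. This is again a tournamenton, since $W_i^\lambda(x,y)+W_i^\lambda(y,x)=1$, and it is regular because the row integrals are still $1/2$. Moreover, $t(H,W_i^\lambda)$ is a polynomial in $\lambda$, obtained by expanding the product in \eqref{eq:tHW}, with constant term $2^{-|E(H)|}$.

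Next we interpolate between the two. For $s\in[0,1]$ consider $U_s=(1-s)W_1+sW_2$. Convex combinations preserve both the tournamenton condition and regularity, so each $U_s$ is a regular tournamenton. The function $s\mapsto t(H,U_s)$ is a polynomial in $s$, hence continuous. It is below $2^{-|E(H)|}$ at $s=0$ and above it at $s=1$, so the intermediate value theorem gives some $s^*$ with $t(H,U_{s^*})=2^{-|E(H)|}$.

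The main obstacle is ensuring $U_{s^*}\not\equiv 1/2$. This can fail, for example when $W_2=1-W_1$ and $s^*=1/2$. To rule this out, we choose the path more carefully so that it avoids the constant tournamenton. One option is to lift to a larger vertex space. Define $U_s$ on $[0,1]$ by placing $W_1$ on the block $[0,1/2]^2$ and $W_2$ on the block $[1/2,1]^2$, each rescaled and mixed with weight, and setting $1/2$ between the blocks; this is regular. Alternatively, we can argue more simply by cases. Since $\{W:\text{regular}\}$ is convex, the set $\{s: U_s\equiv 1/2\}$ contains at most one point. Indeed, if $U_{s_1}=U_{s_2}=1/2$ with $s_1\neq s_2$, then $W_1$ and $W_2$ differ by zero, so $W_1=W_2$, which contradicts $t(H,W_1)<t(H,W_2)$.

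So at most one value $s_0$ gives the constant tournamenton, and at that point $t(H,U_{s_0})=2^{-|E(H)|}$. If the polynomial $p(s)=t(H,U_s)-2^{-|E(H)|}$ has a root other than $s_0$, we are done. Otherwise $s_0$ is its only root in $[0,1]$, and $p$ changes sign there. In that case we perturb the path: take a regular tournamenton $V\not\equiv 1/2$ that is not a multiple of $W_2-W_1$ (for instance a suitable cyclic-shift tournamenton), and use the path $U_s+\delta s(1-s)(V-1/2)$ for small $\delta>0$, clipped so that it stays in $[0,1]$. This path still consists of regular tournamentons, it never hits the constant tournamenton for small $\delta\neq 0$, and by continuity in $\delta$ the sign change persists. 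This yields the desired $W$.
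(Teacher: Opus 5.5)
Your route differs from the paper's and works in outline, but the perturbation step is not justified as written. Clipping $U_s+\delta s(1-s)(V-\tfrac12)$ to $[0,1]$ preserves $W(x,y)+W(y,x)=1$, but in general it destroys regularity, because it changes row integrals. So the claim that the clipped path ``still consists of regular tournamentons'' does not follow.

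The simplest repair is the convex combination $P_s=(1-\eta_s)U_s+\eta_sV$ with $\eta_s=\delta s(1-s)$ and $0<\delta\le 1$. This is a regular tournamenton for every $s$. Alternatively, you can check that clipping is never triggered. In the case $U_{s_0}\equiv\tfrac12$ one has $W_2-\tfrac12=-k(W_1-\tfrac12)$ with $k=(1-s_0)/s_0>0$. Hence $|W_1-\tfrac12|\le\min\{\tfrac12,\tfrac1{2k}\}$ and $|U_s-\tfrac12|=|1-(1+k)s|\,|W_1-\tfrac12|$, which leaves room for a perturbation of size $\delta s(1-s)/2$ when $\delta\le 1$. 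That computation, however, has to be done.

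In either version the following holds:
\begin{itemize}
\item Since $\eta_0=\eta_1=0$, the endpoints are still $W_1$ and $W_2$. The intermediate value theorem therefore applies directly, and no continuity in $\delta$ is needed.
\item The path avoids $\tfrac12$, because $P_s-\tfrac12=(1-\eta_s)\bigl(1-(1+k)s\bigr)(W_1-\tfrac12)+\eta_s(V-\tfrac12)$. Since $V-\tfrac12$ is not a multiple of $W_1-\tfrac12$, this vanishes only if $\eta_s=0$, i.e.\ $s\in\{0,1\}$, where it is nonzero. Your condition on $V$ is equivalent to this, since $W_2-W_1=-(1+k)(W_1-\tfrac12)$.
\item You should also say why such a $V$ exists. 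Take two regular tournamentons whose deviations from $\tfrac12$ are not parallel, e.g.\ the carousel limit and the block construction below with one block constant. At least one of them works.
\end{itemize}

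The paper avoids the whole issue. It places a rescaled copy of one witness on $[0,\alpha)^2$, a rescaled copy of the other on $[\alpha,1)^2$, and $\tfrac12$ between the blocks. This tournamenton is regular, and $t(H,\cdot)$ is a polynomial in $\alpha$ interpolating between the two witness densities. For every $\alpha\in(0,1)$ it contains a rescaled non-constant witness on a block of positive measure, so it is automatically $\not\equiv\tfrac12$. No case analysis or auxiliary $V$ is needed.

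This is the correct version of your ``lift to a larger vertex space'' remark. With \emph{fixed} blocks $[0,\tfrac12]^2$ and $[\tfrac12,1]^2$, the endpoints of your path are no longer $W_1$ and $W_2$, so you lose control of the densities there; letting the block sizes vary is essential. Your linear-interpolation argument is valid once repaired, but it is longer. Your opening paragraph on $W_i^\lambda$ is never used.
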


\begin{proof}
We prove the contrapositive statement:
given a digraph $H$ that has neither the Sidorenko nor the anti-Sidorenko property in regular tournamentons,
it holds that $H$ is not quasirandom forcing in regular tournamentons.
Fix any such an $H$, and let  $W_0$ and $W_1$ be two regular tournamentons that satisfy $t(H,W_0) < 2^{-|E(H)|} < t(H,W_1)$.

For any $\alpha \in (0,1)$, let $W_\alpha$ be the tournamenton defined as follows:
\[
W_\alpha(x,y)=\begin{cases}
       W_0\left(\frac{x}\alpha,\frac{y}\alpha\right) & \mbox{if $\left\{x,y\right\} \subseteq [0,\alpha)$,} \\
       W_1\left(\frac{x-\alpha}{1-\alpha},\frac{y-\alpha}{1-\alpha}\right) & \mbox{if $\left\{x,y\right\} \subseteq [\alpha,1)$, and} \\
       1/2 & \mbox{otherwise.}
       \end{cases}
\]
Informally speaking,
we take copies of $W_0$ and $W_1$ scaled by $\alpha$ and $(1-\alpha)$, respectively, and
orient all the edges between $W_0$ and $W_1$ randomly.

We next show that $W_{\alpha}$ is a regular tournamenton.
If $x\in [0,\alpha)$, it holds that
\[\int_{[0,1]}W(x,y)\dd y=
  \int_{[0,\alpha)}W_0\left(\frac{x}\alpha,\frac{y}\alpha\right)\dd y+\frac{1-\alpha}{2}=
  \frac{\alpha}{2}+\frac{1-\alpha}{2}=\frac{1}{2}.\]
Similarly, if $x\in [\alpha,1]$, it holds that
\[\int_{[0,1]}W(x,y)\dd y=
  \frac{\alpha}{2}+\int_{[\alpha,1]}W_1\left(\frac{x-\alpha}{1-\alpha},\frac{y-\alpha}{1-\alpha}\right)\dd y=
  \frac{\alpha}{2}+\frac{1-\alpha}{2}=\frac{1}{2}.\]
It follows that $W_{\alpha}$ is a regular tournamenton.

We now apply Intermediate Value Theorem to the function $F:[0,1]\to [0,1]$ defined as $F(z)=t(H,W_z)$;
note that the function $F$ is continuous on $[0,1]$.
It follows that there exists $\tau \in (0,1)$ such that $t(H,W_{\tau}) = 2^{-|E(H)|}$.
Since the tournamenton $W_\tau$ is not equal to $1/2$ everywhere,
we conclude that $H$ is not quasirandom forcing in regular tournamentons.
\end{proof}

We finish this section with a generalization of H\"older's Inequality, which will be used later.
Let $\Omega$ be a probability space with the probability measure $\mu$ and let $k$ be a positive integer. For any collection
of measurable functions $F_i:\Omega\to [0,1]$, $i\in [k]$, it holds that
\[\int_{\Omega} \prod_{i\in [k]}F_i(x)\dd \mu(x)\le\prod_{i\in [k]}\left(\int_{\Omega}F_i(x)^{p_i}\dd \mu(x)\right)^{1/p_i}\]
whenever $p_1,\ldots,p_k$ are non-negative reals such that $1/p_1+\cdots+1/p_k\le 1$.

\section{Constructions}
\label{sec:constr}

In this section, we present regular tournamentons witnessing that
a tournament $H$ does not have the Sidorenko property for nearly regular tournaments
unless $H$ is a transitive tournament or a tournament $T[a,b,c]$ for some $a,b,c\in\NN$.
This statement is also implied by~\cite[Theorem 4]{BolH90},
which gives a classification of tournaments $H$ such that
every sufficiently large regular tournament contains a copy of $H$:
tournaments with this property are referred to as omnipresent in~\cite{BolH90} and
a tournament is omnipresent if and only if it is a transitive tournament or
a tournament $T[a,b,c]$ for some $a,b,c\in\NN$.
For completeness of our presentation,
we decided to include a short argument leading to Theorem~\ref{thm:constr} in this paper.

A key step in this argument is the following lemma giving a structural characterization of tournaments $T[a,b,c]$.
To state the lemma, we introduce the following notation (see Figure~\ref{fig:special} for illustration):
$W_4$ is the $4$-vertex tournament with one vertex being a source and the remaining vertices forming a cyclically oriented triangle,
$L_4$ is the $4$-vertex tournament with one vertex being a sink and the remaining vertices forming a cyclically oriented triangle, and
$C_5$ is the $5$-vertex carousel tournament,
i.e., the vertex of $C_5$ can be viewed as $\ZZ_5$ and $uv$ is an edge if and only if $v-u\equiv1\mod 5$ or $v-u\equiv2\mod 5$.

\begin{figure}
\begin{center}
\epsfbox{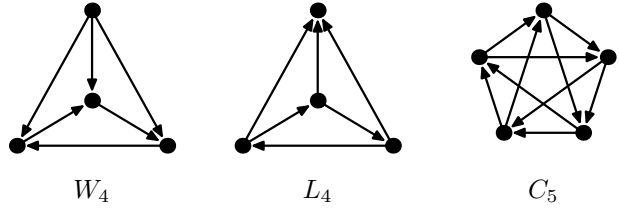}
\end{center}
\caption{The tournaments $W_4$, $L_4$ and $C_5$.}
\label{fig:special}
\end{figure}

\begin{lemma}
\label{lm:sub}
Every non-transitive tournament that contains neither of $W_4$, $L_4$ and $C_5$ is isomorphic to $T[a,b,c]$ for some $a,b,c\in\NN$.
\end{lemma}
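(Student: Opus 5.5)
We argue by induction on the number of vertices of a non-transitive tournament $H$ containing none of $W_4$, $L_4$ and $C_5$. Observe first that the class of tournaments avoiding $W_4$, $L_4$ and $C_5$ is closed under taking subtournaments. The key consequence of excluding $W_4$ and $L_4$ is the following: for every vertex $v$, both the out-neighborhood $N^+(v)$ and the in-neighborhood $N^-(v)$ induce transitive tournaments. Indeed, a cyclic triangle inside $N^+(v)$ together with $v$ forms $W_4$, and a cyclic triangle inside $N^-(v)$ together with $v$ forms $L_4$.

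\emph{Base case.} Since $H$ is non-transitive, it contains a cyclic triangle $xyz$ with edges $xy$, $yz$ and $zx$. Put $A=\{x\}$, $B=\{y\}$ and $C=\{z\}$; this is $T[1,1,1]$.

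\emph{Inductive step.} Suppose $H'\subseteq H$ is an induced copy of $T[a,b,c]$ with parts $A$, $B$, $C$, where $A\to B\to C\to A$ and each part is transitive. Take a vertex $w\notin V(H')$; we show that $w$ can be added to one of the parts.
\begin{itemize}
\item Pick $u\in A$, $v\in B$ and $t\in C$. These form a cyclic triangle, so $w$ cannot dominate all three of them (this would give $W_4$), and cannot be dominated by all three of them (this would give $L_4$).
\item Using the transitivity of in- and out-neighborhoods repeatedly, we aim to show that the pattern of $w$ towards $H'$ must be exactly that of a twin of $A$, $B$ or $C$. For instance, a twin of $A$ satisfies $C\to w\to B$, with arbitrary edges to $A$.
\item Concretely: if $w$ has an out-neighbor $p$ in $B$ and an in-neighbor $q$ in $C$, then every vertex of $A$ is forced to be "compatible" with $w$. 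Otherwise some $u\in A$ would give $u\to w$ together with $u\to p$ and $p\to\ldots$, producing a cyclic triangle in $N^+(u)$ or $N^-(\cdot)$. The mixed cases, where $w$ has both in- and out-neighbors inside a single part in an inconsistent way, should be excluded in the same manner or by finding a $C_5$. For example, if $w\to t$, $t\to u$, $u\to w$, and $w$ relates to $v\in B$ inconsistently, then $\{u,v,t,w,\cdot\}$ should span a carousel.
\end{itemize}
Once $w$ is shown to be a twin of, say, $A$, the set $A\cup\{w\}$ has the required edges to $B$ and $C$. It is also transitive, because $A\cup\{w\}\subseteq N^+(t)$ for any $t\in C$. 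Hence $A\cup\{w\}$, $B$, $C$ form an induced $T[a+1,b,c]$, and adding the remaining vertices one at a time finishes the proof.

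The main obstacle is the case analysis for $w$. I would first treat $H'=T[1,1,1]$ together with a fifth vertex, that is, the possible patterns of $w$ on a cyclic triangle $xyz$:
\begin{itemize}
\item out-degree $0$ or $3$ is excluded by $L_4$ and $W_4$;
\item out-degree $1$ or $2$ makes $w$ a twin-candidate of exactly one vertex, for example $z\to w\to y$ and $w$–$x$ arbitrary.
\end{itemize}
After that I would lift this to general $H'$ by examining $w$ against every triangle $(u,v,t)\in A\times B\times C$. Each such triangle assigns $w$ a "type" in $\{A,B,C\}$, and it remains to show that the type is the same for all triangles. A change of type between two triangles sharing two vertices should yield a forbidden configuration: either a cyclic triangle in some neighborhood, or, in the genuinely new case, the carousel $C_5$ on the four vertices of the two triangles plus $w$. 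Verifying that every inconsistent type change produces one of $W_4$, $L_4$, $C_5$ is the routine but fiddly part.
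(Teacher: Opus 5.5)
Your strategy is sound and can be completed, but as written it has a gap: the one non-trivial step is asserted, not proved. Growing an induced $T[a,b,c]$ one vertex at a time, giving $w$ a type with respect to each cyclic triangle $(u,v,t)\in A\times B\times C$, and reducing to the claim that replacing one vertex of the triangle cannot change the type (such triangles are connected under single swaps) is a valid way to prove the lemma. But the claim itself is left at ``should yield a forbidden configuration'', and the concrete fragments you give do not target it correctly. In your ``Concretely'' bullet, what must be shown is $C\to w\to B$; nothing is needed about $A$, since edges between a twin of $A$ and $A$ are arbitrary. In your carousel example, if $u\to w\to t$ then $w$ has type $B$ with respect to $(u,v,t)$ whatever the edge between $w$ and $v$ is, so there is no inconsistency with $v$ to exploit. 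The fifth vertex of the $C_5$ has to be a second vertex of one of the parts.

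The missing check is short. First, $w$ has type $A$ with respect to $(u,v,t)$ if and only if $t\to w\to v$, so swapping $u$ for $u'\in A$ never changes the type. Swapping $v$ for $v'\in B$ can change it only if $v'\to w$. Then $w\to u$ is forced, since otherwise $\{u,v',t,w\}$ induces $L_4$. On $\{u,v,v',t,w\}$ there are two cases:
\begin{itemize}
\item if $v'\to v$, then $v\to t\to w\to v$ is a cyclic triangle in $N^+(v')$, giving $W_4$;
\item if $v\to v'$, all five vertices have out-degree two, giving $C_5$.
\end{itemize}
Swapping $t$ for $t'\in C$ is symmetric: a type change needs $w\to t'$, then $u\to w$ is forced by $W_4$. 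Either $t\to t'$, so $t\to w\to v\to t$ lies in $N^-(t')$ and gives $L_4$, or $t'\to t$, which again gives $C_5$. Rotational symmetry covers the remaining cases, and with this inserted your proof is complete.

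The paper organizes the same argument differently. It fixes a single cyclic triangle $u_1u_2u_3$ and partitions all vertices at once into $V_1,V_2,V_3$ by their type with respect to that triangle. It then checks that an edge from $V_{i+1}\setminus\{u_{i+1}\}$ to $V_i\setminus\{u_i\}$ creates $W_4$, $L_4$ or $C_5$. These are exactly your five-vertex configurations: your type change from $A$ at $(u,v,t)$ to $C$ at $(u,v',t)$ is, relative to the triangle $(u,v',t)$, the paper's forbidden edge $w\to v$ with $v\in V_2$ and $w\in V_3$. The global partition avoids both your induction and the connectivity argument over $A\times B\times C$.
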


\begin{proof}
Fix $H$ a non-transitive tournament that contains neither of the tournaments $L_4$, $W_4$ and $C_5$.
Since $H$ is not transitive, it contains a cyclically oriented triangle;
let $u_1u_2u_3$ be any cyclically oriented triangle of $H$.

We now partition the vertices $V$ into three sets $V_1$, $V_2$ and $V_3$ as follows.
First, for every $i\in [3]$, the vertex $u_i$ is included to the set $V_i$.
Let $v$ be a vertex of $V$ different from $u_1$, $u_2$ and $u_3$.
Since the tournament $H$ contains neither $W_4$ nor $L_4$,
the vertex $v$ has both an in-neighbor and an out-neighbor among the vertices $u_1$, $u_2$ and $u_3$.
If the vertex $v$ has exactly one in-neighbor among these three vertices, say $u_i$,
we add $v$ to the set $V_{i+1}$ (the index is taken modulo $3$).
If the vertex $v$ has exactly two in-neighbors among the vertices $u_1$, $u_2$ and $u_3$,
then $v$ has exactly one out-neighbor among them, say $u_j$,
we add $v$ to the set $V_{j-1}$ (again, the index is taken modulo $3$).
We conclude that the set $V_i$ is non-empty for every $i\in [3]$ ($V_i$ contains the vertex $u_i$), and
each vertex of the set $V_i$ has $u_{i-1}$ among its in-neighbors and $u_{i+1}$ among its out-neighbors.

\begin{figure}
\begin{center}
\epsfbox{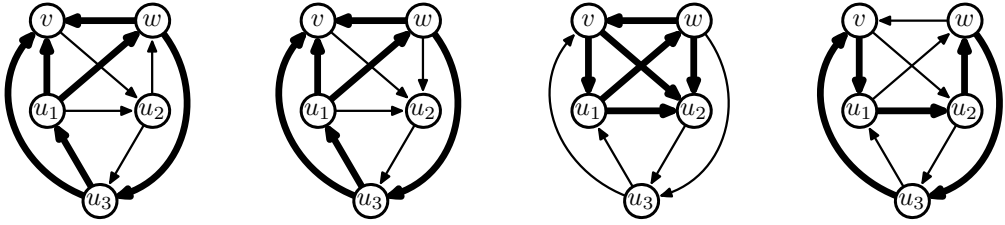}
\end{center}
\caption{The $5$-vertex tournament induced by vertices $u_i$, $u_{i+1}$, $u_{i+2}$, $v$ and $v'$ that
         depends on the direction of the edge between the vertices $u_i$ and $v$ and the edge between $u_{i+1}$ and $v'$.
	 The figure displays the case $i=1$ (other cases are symmetric).
	 The subgraphs isomorphic to $W_4$ or $L_4$ are depicted by bold edges in the first three cases and
	 the edges of one of the cycles of length five in $C_5$ are depicted by bold edges in the fourth case.}
\label{fig:subvv}
\end{figure}

Fix $i\in [3]$.
Consider any vertex $v$ contained in $V_i$ different from $u_i$ and
any vertex $w$ contained in $V_{i+1}$ different from $u_{i+1}$ (indices modulo $3$).
In case the edge between $v$ and $w$ was directed from $w$ to $v$,
the tournament $H$ would contain one of the tournaments $L_4$, $W_4$ and $C_5$ (the four cases that
depend on the direction of the edges $u_iv$ and $u_{i+1}w$ are drawn in Figure~\ref{fig:subvv}).
Hence, the edge between $v$ and $w$ is directed from $v$ to $w$.
Since the choice of $v$ and $w$ was arbitrary,
we conclude that all the edges of $H$ between $V_i$ and $V_{i+1}$ are directed from $V_i$ to $V_{i+1}$ (indices modulo $3$).

Finally, note that for every $i\in V_i$, every triple of vertices of $V_i$ induces a transitive tournament
(otherwise, the triple and the vertex $u_{i+1}$ would form the tournament $W_4$).
Hence, the tournaments induced by each of the sets $V_1$, $V_2$ and $V_3$ are transitive,
and $H$ is isomorphic to the tournament $T[|V_1|,|V_2|,|V_3|]$.
\end{proof}

We are now ready to prove the main theorem of this section.

\begin{theorem}
\label{thm:constr}
Let $H$ be a tournament that is neither a transitive tournament nor a tournament $T[a,b,c]$ for some $a,b,c\in\NN$.
Then, there exists a regular tournamenton $W$ such that $t(H,W)=0$.
\end{theorem}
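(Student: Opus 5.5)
By Lemma~\ref{lm:sub}, every tournament $H$ as in the statement contains $W_4$, $L_4$ or $C_5$ as a subtournament. Homomorphism densities into tournamentons are monotone under taking induced subtournaments only in the sense of copies. However, if $W$ is a $\{0,1\}$-valued tournamenton, then $t(H,W)=0$ whenever $W$ admits no homomorphism of $H$. Since $H$ is a tournament, a homomorphism into a $\{0,1\}$-valued $W$ must map distinct vertices to points $x_v$ with $W(x_v,x_w)=1$. This behaves like an injective embedding into the "tournament" on $[0,1]$, up to measure-zero diagonal issues; collisions $x_v=x_w$ have measure zero. So it suffices to construct three regular $\{0,1\}$-valued tournamentons, avoiding respectively $W_4$, $L_4$ and $C_5$. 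More precisely, if $t(F,W)=0$ for a subtournament $F$ of $H$, then $t(H,W)\le t(F,W)=0$, because the integrand for $H$ is bounded by the integrand for $F$. This monotonicity holds for any $W$, not only $\{0,1\}$-valued ones.

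\emph{Avoiding $W_4$ and $L_4$.} Take the circular tournamenton: identify $[0,1)$ with the circle $\RR/\ZZ$ and set $W(x,y)=1$ if $y-x \bmod 1\in(0,1/2)$ and $0$ otherwise. This is regular, since each $x$ beats exactly half of the circle. The out-neighbourhood of $x$ is the open half-circle $(x,x+1/2)$, and on an arc of length less than $1/2$ the function $W$ induces a transitive (linear) order. Hence almost every triple of out-neighbours of a point is transitive, so $t(W_4,W)=0$, and symmetrically $t(L_4,W)=0$ using in-neighbourhoods. This should be a short check.

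\emph{Avoiding $C_5$.} The circular tournamenton contains $C_5$, since five equally spaced points form a carousel, so a different construction is needed. I would take a blow-up of the cyclic triangle: split $[0,1)$ into thirds $I_1,I_2,I_3$ with $I_i\to I_{i+1}$ completely, and inside each third place a scaled regular tournamenton. The outside contribution to each out-degree is $1/3$, so inside each part the out-degree must also be $1/6$ relative to the full measure, i.e.\ half of $1/3$. Thus each part must be regular, and it is natural to iterate. To avoid $C_5$ it is simpler to put the constant $1/2$ inside the parts, but then $C_5$ appears inside a part. I would therefore show that the directed structure forces $C_5$ to lie in one part. Every vertex of $C_5$ has out-degree $2$ and lies on cyclic triangles. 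If its five vertices meet at least two parts, the pattern of parts must respect $I_i\to I_{i+1}$, and one checks that no distribution works: a part containing a single vertex $v$ receives all edges from the preceding part and sends all edges to the next part. This forces strong constraints, and a short case check on part sizes ($5$ vertices over $3$ parts, with $C_5$ vertex-transitive and strongly connected) should rule it out. The case of all five vertices in one part is handled by recursion: use the iterated blow-up $W=\lim$ of recursively nested triangle blow-ups (a fractal, $\{0,1\}$-valued almost everywhere since the diagonal cells shrink to measure zero), which is regular by the computation above at each level. Then almost every $5$-tuple of points is separated at some finite level, where the case analysis applies.

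The main obstacle is the case analysis showing that $C_5$ cannot be split across the three parts of a cyclic-triangle blow-up. Equivalently, I must check that $C_5$ is not a blow-up-compatible image, i.e.\ that $C_5$ admits no partition into at most three intervals that are consistent with $T[a,b,c]$-type edges between classes. Since $T[a,b,c]$ structure between parts is exactly what Lemma~\ref{lm:sub} describes, and $C_5$ is not of the form $T[a,b,c]$, this should follow by noting that a separated $C_5$ would be a tournament whose vertex partition across parts has all edges oriented $V_i\to V_{i+1}$. Restricting to the vertices then gives $C_5\cong T'$, a blow-up with arbitrary tournaments inside the parts. Since $C_5$ has no nontrivial module (it is prime), this is impossible unless each part is a single vertex, contradicting having five vertices.
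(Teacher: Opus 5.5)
Your proposal is correct and is essentially the paper's proof: it uses the same reduction via Lemma~\ref{lm:sub}, which your bound $t(H,W)\le t(F,W)$ for a subtournament $F$ of $H$ justifies. Your circular and fractal tournamentons are precisely the limits of the paper's carousel tournaments on $\ZZ_{2n+1}$ and of its iterated triangle blow-ups on $\ZZ_3^n$. The differences are only in presentation: you work with the limit objects directly (set $W=1/2$ on the null set where $y-x\equiv 0$ or $1/2 \pmod 1$ so that $W(x,y)+W(y,x)=1$ holds everywhere), and you rule out a split copy of $C_5$ by noting that its classes would be modules of the prime tournament $C_5$ — a routine check, since no $2$-, $3$- or $4$-subset of $\ZZ_5$ is a module — rather than by the paper's in-degree case analysis.
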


\begin{proof}
By Lemma~\ref{lm:sub}, it is enough to establish the theorem for $H$ being $W_4$, $L_4$ and $C_5$.
Rather than exhibiting a specific regular tournamenton,
we will construct a sequence $(T_n)_{n\in\NN}$ of regular tournaments such that $t(H,T_n)=0$ and
the number of vertices of $T_n$ tends to infinity.
The sought tournamenton $W$ will be a limit tournamenton of a convergent subsequence of $(T_n)_{n\in\NN}$;
we remark that the sequences that we construct are actually convergent, however, we do not need this stronger claim.

Let $T_n$ be the $(2n+1)$-vertex carousel tournament,
i.e., the vertices of $T_n$ are $\ZZ_{2n+1}$ and
there is an edge directed from $x$ to $y$ if and only if $y-x\in\{1,\ldots,n\}$ (modulo $2n+1$).
Note that the out-neighbors of any vertex of $T_n$ induce a transitive tournament and
likewise the in-neighbors of any vertex induce a transitive tournament.
It follows that $t(W_4,T_n)=0$ and $t(L_4,T_n)=0$,
which establishes the statement when $H$ is $W_4$ or $L_4$.

It remains to consider the case $H=C_5$.
Let $T_n$ be the $n$-th iterated blow-up of the cyclically oriented triangle,
i.e., the vertices of $T_n$ are $\ZZ_3^n$ and
there is an edge directed from $x$ to $y$ if and only if $y_i-x_i=1$ (modulo $3$) for the smallest index $i$ such that $x_i\not=y_i$.
We claim that $t(C_5,T_n)=0$.
Suppose that there are five vertices $v^1,\ldots,v^5$ of $T_n$ inducing $C_5$ (listed in any order) and
let $i$ be the smallest index such that two of the vertices differ in the $i$-th coordinate.
By symmetry, we may assume $v^1_i=1$ and $v^2_i=2$.
One of the vertices must have the $i$-th coordinate equal to $3$ (otherwise, the induced tournament is not strongly connected);
hence, we can also assume that $v^3_i=3$.
By symmetry, we may assume that $v^4_i=1$ and $v^5_i\in\{1,2\}$.
If $v^5_i=1$, then the in-degree of $v_2$ in the subtournament induced by $v^1,\ldots,v^5$ is three, which is impossible.
If $v^5_i=2$, then both $v^1$ and $v^4$ are in-neighbors of each of the vertices $v^2$ and $v^5$, and
since there is an edge $v^2v^5$, the in-degree of either $v^2$ or $v^5$ is three, which is not possible.
We conclude that no five vertices of $T_n$ induce $C_5$ and so $t(C_5,T_n)=0$.
\end{proof}

\section{The Sidorenko property for regular $W$}
\label{sec:Sidorenko}

In this section, we prove our main result,
which complements the constructions presented in Section~\ref{sec:constr}
by showing that every tournament $T[a,b,c]$ with $a+b+c\ge 4$ is quasirandom forcing in regular tournamentons.
We start with the following auxiliary lemma on digraphs $B[k]$;
recall that $B[k]$ is the digraph obtained from $C[1,1,k]$
by removing the edge between the two vertices contained in the parts of size one.

\begin{lemma}
\label{lm:forcing}
Let $k\ge 2$.
Every regular tournamenton $W$ satisfies that $t(B[k],W)\ge 2^{-2k}$ and
the equality holds if and only if $W\equiv 1/2$.
\end{lemma}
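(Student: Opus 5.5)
The approach is to write $t(B[k],W)$ in terms of the two-path density between a source $x$ and a sink $z$. Set
\[P(x,z)=\int W(x,y)W(y,z)\dd y ,\]
so that
\[t(B[k],W)=\int P(x,z)^k\dd x\dd z\]
by integrating out the $k$ middle vertices, which are twins. By Jensen's (or H\"older's) inequality,
\[t(B[k],W)\ge\left(\int P(x,z)\dd x\dd z\right)^k .\]
The inner quantity is the density of the directed path of length two,
\[\int P(x,z)\dd x\dd z=\int\left(\int W(x,y)\dd x\right)\left(\int W(y,z)\dd z\right)\dd y .\]
For a regular $W$ both factors equal $1/2$ for almost every $y$, so the integral is $1/4$. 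Hence $t(B[k],W)\ge 4^{-k}=2^{-2k}$. The constant tournamenton clearly attains equality.

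For the equality case, $k\ge 2$ makes $s\mapsto s^k$ strictly convex, so equality forces $P(x,z)=1/4$ for almost every $(x,z)$. The task is then to show that this implies $W\equiv 1/2$, and this is the main obstacle. I would pass to the kernel $U=W-1/2$, which is antisymmetric, $U(y,x)=-U(x,y)$, and by regularity satisfies $\int U(x,y)\dd y=0$ for almost every $x$. Expanding gives
\[P(x,z)=\tfrac14+\tfrac12\int U(y,z)\dd y+\tfrac12\int U(x,y)\dd y+\int U(x,y)U(y,z)\dd y .\]
The two linear terms vanish by regularity (and antisymmetry), so equality means $\int U(x,y)U(y,z)\dd y=0$ almost everywhere. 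In other words, $U^2=0$ as an integral operator on $L^2[0,1]$.

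Now take the diagonal. Since $U^2=0$, the Hilbert--Schmidt operator $U^2$ has vanishing Hilbert--Schmidt norm, and so its trace, defined via $\int\int U(x,y)U(y,x)\dd x\dd y$ for the square of a Hilbert--Schmidt operator, also vanishes. Concretely,
\[0=\langle U^2,\,\mathrm{Id}\rangle\text{-type trace}=\int\!\!\int U(x,y)U(y,x)\dd x\dd y=-\|U\|_2^2 .\]
This avoids evaluating $U^2$ on the measure-zero diagonal. The cleanest way to see it is $\mathrm{tr}(U^2)=\langle U,U^{*}\rangle_{HS}$ with $U^{*}=-U$ by antisymmetry. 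Hence $\|U\|_2=0$, that is, $W\equiv1/2$.

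The delicate points are only the justification that the trace identity holds for the Hilbert--Schmidt kernel $U$, which is bounded, and the use of strict convexity, which is where $k\ge2$ is needed.
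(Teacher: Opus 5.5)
Your proof is correct. For the inequality you follow the paper: Jensen's inequality applied to the two-path density $G(x,z)=\int W(x,y)W(y,z)\dd y$, with $\int G=1/4$. You compute this value directly by integrating out the endpoints. The paper instead goes through the identity $F+G=1/2$ for the common out-neighborhood density $F(x,y)=\int W(x,z)W(y,z)\dd z$, together with $t(S^-[2],W)=1/4$.

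The real difference is the equality case. The paper converts $G\equiv 1/4$ into $F\equiv 1/4$ and then invokes the limit form of Chung and Graham's quasirandomness property $P_4$. You prove the needed implication from scratch: you write $G-1/4$ as the kernel of $U^2$ for the skew-symmetric $U=W-1/2$, and then use $U^*=-U$. The two arguments are the same statement in disguise. Indeed $U^2=-UU^*$, and for regular $W$ the kernel of $UU^*$ is exactly $F-1/4$. So your argument is a self-contained proof of the $P_4$ implication in the regular setting, and it recovers $F+G=1/2$ along the way.

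You can drop the trace-class justification entirely. From $U^2=0$ you get $\|Uf\|_2^2=\langle U^*Uf,f\rangle=-\langle U^2f,f\rangle=0$ for every $f\in L^2[0,1]$. Hence $U$ is the zero operator, and its kernel vanishes almost everywhere.

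The paper's route is shorter but leans on an external result that is formulated for sequences of tournaments. Yours is fully self-contained and shows exactly where regularity is used (it kills the linear terms) and where antisymmetry is used (it turns $U^2=0$ into $U=0$).
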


\begin{proof}
Fix $k\ge 2$ and a regular tournamenton $W$.
We define two auxiliary functions $F:[0,1]^2\to [0,1]$ and $G:[0,1]^2\to [0,1]$ as follows:
\begin{align*}
F(x,y) & = \int W(x,z)W(y,z)\dd z\mbox{ and} \\
G(x,y) & = \int W(x,z)W(z,y)\dd z.
\end{align*}
Informally speaking,
$F(x,y)$ measures the number of common out-neighbors of $x$ and $y$ and
$G(x,y)$ measures the number of paths of length two from $x$ to $y$.
Observe that it holds for every $x,y\in [0,1]$ that
\begin{equation}
F(x,y)+G(x,y) = \int W(x,z)W(y,z)+W(x,z)W(z,y)\dd z = \int W(x,z)\dd z = \frac{1}{2}.
\label{eq:FGsum}
\end{equation}
Since the tournamenton $W$ is regular, it holds that
\[\int F(x,y)\dd x\dd y=t(S^-[2],W)=\frac{1}{4},\]
which implies using \eqref{eq:FGsum} that
\[\int G(x,y)\dd x\dd y=\frac{1}{4}.\]
Jensen's Inequality now yields that
\[t(B[k],W)=\int G(x,y)^k\dd x\dd y\ge\left(\int G(x,y)\dd x\dd y\right)^k=2^{-2k},\]
and the equality holds if and only if $G(x,y)=1/4$ for almost all $(x,y)\in [0,1]^2$.
By \eqref{eq:FGsum}, it holds that $t(B[k],W)= 2^{-2k}$ if and only if
$F(x,y)=1/4$ for almost all $(x,y)\in [0,1]^2$.
However, this is exactly the limit formulation of the property $P_4$ of quasirandom sequences of tournaments from~\cite{ChuG91}.
\end{proof}

To state the next lemma, we need to define three auxiliary functions, which will be parameterized by $k\in\NN$.
The functions $N_{W,k}^{+}:[0,1]^k\to [0,1]$ and $N_{W,k}^-:[0,1]^k\to [0,1]$
measure the ``size'' of the common out-neighborhood and the common in-neighborhood of a $k$-tuple points in a tournamenton $W$:
\begin{align*}
N_{W,k}^+(x_1,\ldots,x_k) & = \int \prod_{i\in [k]} W(x_i,z)\dd z\mbox{ and}\\
N_{W,k}^-(x_1,\ldots,x_k) & = \int \prod_{i\in [k]} W(z,x_i)\dd z.
\end{align*}
Finally, the function $D_{W,k}:[0,1]^k\to [0,1]$ measures the density of edges
directed from the common out-neighborhood to the common in-neighborhood:
\[
D_{W,k}(x_1,\ldots,x_k)=\frac{\int W(y,z)\prod\limits_{i\in [k]} W(x_i,y)W(z,x_i)\dd y\dd z}{N_{W,k}^+(x_1,\ldots,x_k)N_{W,k}^-(x_1,\ldots,x_k)}.
\]
When the tournamenton $W$ is clear from context, we drop it from the subscript and
simply write $N_k^+$, $N_k^-$ and $D_k$.
We will also write $x_{[k]}$ instead of $x_1,\ldots,x_k$.

Observe that the following holds for every $k\in\NN$ and every tournamenton $W$:
\begin{equation}
t\left(C[1,1,k],W)\right)=\int N_{W,k}^+(x_{[k]})D_{W,k}(x_{[k]})N_{W,k}^-(x_{[k]}) \dd x_{[k]}.
\label{eq:C11k}
\end{equation}

The next lemma says that the digraphs $C[a,b,k]$ have a Sidorenko type property:

\begin{lemma}
\label{lm:Cabk}
The following holds for every tournamenton $W$ and all $a,b,k\in\NN$:
\begin{equation}
t\left(C[a,b,k],W\right)\ge \int N_{W,k}^+(x_{[k]})^a D_{W,k}(x_{[k]})^{ab} N_{W,k}^-(x_{[k]})^b \dd x_{[k]}
\label{eq:Cabk}
\end{equation}
\end{lemma}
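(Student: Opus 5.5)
Fix $x_{[k]}$, the images of the $k$ vertices of the $c$-part. We write $t(C[a,b,k],W)$ as an integral over $x_{[k]}$ of a conditional density. Let $A$ be the $a$-part and $B$ the $b$-part. In $C[a,b,k]$ the edges go from $A$ to $B$, from $B$ to the $k$-part, and from the $k$-part to $A$. Hence each vertex of $A$ must lie in the common out-neighborhood of $x_{[k]}$, and each vertex of $B$ in the common in-neighborhood. Define the measures on $[0,1]$
\[\mu^+(y)=\prod_{i\in[k]}W(x_i,y)\dd y,\qquad \mu^-(z)=\prod_{i\in[k]}W(z,x_i)\dd z,\]
with total masses $N_k^+(x_{[k]})$ and $N_k^-(x_{[k]})$. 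The integrand for fixed $x_{[k]}$ is then
\[\int \prod_{j\in[a]}\prod_{\ell\in[b]}W(y_j,z_\ell)\,\prod_j\dd\mu^+(y_j)\prod_\ell \dd\mu^-(z_\ell).\]

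Where $N_k^+N_k^->0$, normalize $\mu^\pm$ to probability measures $\nu^\pm$. The inner integral equals $(N_k^+)^a(N_k^-)^b$ times the homomorphism density of the complete bipartite digraph $\vec K_{a,b}$ (all edges oriented from the $a$-side to the $b$-side) in the bipartite kernel $W(y,z)$ on the probability space $(\nu^+\times\nu^-)$. By definition,
\[D_k(x_{[k]})=\int W(y,z)\dd\nu^+(y)\dd\nu^-(z).\]
So it suffices to show that complete bipartite graphs satisfy Sidorenko's inequality for bipartite kernels on arbitrary probability spaces:
\[\int\prod_{j,\ell}W(y_j,z_\ell)\ge\left(\int W\right)^{ab}.\]
This is classical. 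Integrating out the $z_\ell$ first gives
\[\int \left(\int\prod_j W(y_j,z)\dd\nu^-(z)\right)^b \prod_j\dd\nu^+(y_j)\ge\left(\int\prod_j W(y_j,z)\dd\nu^-(z)\prod_j\dd\nu^+(y_j)\right)^b\]
by Jensen. The right-hand side equals $\left(\int f(z)^a\dd\nu^-(z)\right)^b$ with $f(z)=\int W(y,z)\dd\nu^+(y)$. By Jensen again this is at least $\left(\int f\,\dd\nu^-\right)^{ab}=D_k^{ab}$.

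Multiplying back gives the pointwise bound
\[(N_k^+)^a (D_k)^{ab} (N_k^-)^b\]
for the conditional integrand. Points where $N_k^+N_k^-=0$ contribute zero to both sides, so $D_k$ may be set arbitrarily there. Integrating over $x_{[k]}$ yields \eqref{eq:Cabk}. The only care needed is measurability and the degenerate zero-mass set; the main step is the reduction to the weighted bipartite Sidorenko inequality, which the two Jensen applications settle.
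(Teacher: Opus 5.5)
Your proof is correct and follows essentially the same route as the paper. You normalize the common out- and in-neighbourhood weights of $x_{[k]}$ to probability measures and apply Jensen's inequality twice (integrating out the $b$-part for the exponent $b$, then the $a$-part for the exponent $a$, which yields $D_k^{ab}$), handle the set where $N_k^+N_k^-=0$ separately, and integrate over $x_{[k]}$ — exactly the ``standard argument for the Sidorenko property of bipartite graphs'' that the paper carries out.
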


\begin{proof}
The proof of the lemma follows the standard argument for the Sidorenko property of bipartite graphs.
Throughout the proof, we use $W(x_{[k]}, y_{[\ell]})$ as
a shorthand notation for the double product $\prod_{i \in [k]}\prod_{j \in [\ell]}W(x_i, y_j)$
where $x \in [0, 1]^k$ and $y \in [0, 1]^\ell$.
We extend the notation to the case when $k=1$ or $\ell=1$,
e.g., $W(x_{[k]}, y)$ stands for the product $\prod_{i \in [k]}W(x_i, y)$.

Fix $a,b,k\in\NN$ and a tournamenton $W$ for the proof.
Consider $x\in [0,1]^k$ such that $N_k^+(x_{[k]})>0$ and $N_k^-(x_{[k]})>0$.
Observe that it holds that
    \[\int\frac{W(x_{[k]},y_{[a]})}{N_k^+(x_{[k]})^a}\dd y_{[a]}=
    \prod_{j\in [a]}\int\frac{W(x_{[k]},y_j)}{N_k^+(x_{[k]})}\dd y_j=1.\]
In particular, we can interpret the integrand in the left integral as the density of a probability measure on $[0,1]^a$.
We now apply Jensen's Inequality as follows:
\begin{eqnarray}
    & & \int \frac{W(x_{[k]},y_{[a]})W(y_{[a]},z_{[b]})W(z_{[b]},x_{[k]})}{N_k^+(x_{[k]})^a N_k^-(x_{[k]})^b}\dd y_{[a]} \dd z_{[b]}\nonumber\\
& = & \int\frac{W(x_{[k]},y_{[a]})}{N_k^+(x_{[k]})^a}\int\frac{W(y_{[a]},z_{[b]})W(z_{[b]},x_{[k]})}{N_k^-(x_{[k]})^b}\dd z_{[b]}\dd y_{[a]} \nonumber\\
& = & \int\frac{W(x_{[k]},y_{[a]})}{N_k^+(x_{[k]})^a}\left(\int\frac{W(y_{[a]},z)W(z,x_{[k]})}{N_k^-(x_{[k]})}\dd z\right)^b\dd y_{[a]}\nonumber\\
& \ge &\left(\int\frac{W(x_{[k]},y_{[a]})}{N_k^+(x_{[k]})^a}\int\frac{W(y_{[a]},z)W(z,x_{[k]})}{N_k^-(x_{[k]})}\dd z\dd y_{[a]}\right)^b\nonumber\\
& = & \left(\int \frac{W(x_{[k]},y_{[a]})W(y_{[a]},z)W(z,x_{[k]})}{N_k^+(x_{[k]})^a N_k^-(x_{[k]})}\dd y_{[a]}\dd z\right)^b.
\label{eq:Cabk1}
\end{eqnarray}
Next observe that it holds that
    \[\int\frac{W(z,x_{[k]})}{N_k^-(x_{[k]})}\dd z=1,\]
which means that we can interpret the integrand as the density of a probability measure on $[0,1]$.
So, we get by another application of Jensen's Inequality the following:
\begin{eqnarray}
    & & \int \frac{W(x_{[k]},y_{[a]})W(y_{[a]},z)W(z,x_{[k]})}{N_k^+(x_{[k]})^a N_k^-(x_{[k]})}\dd y_{[a]}\dd z\nonumber\\
    & = & \int \frac{W(z,x_{[k]})}{N_k^-(x_{[k]})}\left(\int\frac{W(x_{[k]},y)W(y,z)}{N_k^+(x_{[k]})}\dd y\right)^a\dd z\nonumber\\
    & \ge & \left(\int \frac{W(z,x_{[k]})}{N_k^-(x_{[k]})}\int\frac{W(x_{[k]},y)W(y,z)}{N_k^+(x_{[k]})}\dd y\dd z\right)^a\nonumber\\
    & = & \left( \int \frac{W(x_{[k]},y)W(y,z)W(z,x_{[k]})}{N_k^+(x_{[k]})N_k^-(x_{[k]})}\dd y\dd z\right)^a=D_k(x_{[k]})^a.
\label{eq:Cabk2}
\end{eqnarray}

    Let $\Omega\subseteq [0,1]^k$ be the set of those $x \in [0, 1]^k$ such that $N_k^+(x_{[k]})>0$ and $N_k^-(x_{[k]})>0$. Using
    \eqref{eq:Cabk1} and \eqref{eq:Cabk2} we conclude that for every $x \in \Omega$ it holds that

\begin{eqnarray}
    & & \int W(x_{[k]},y_{[a]})W(y_{[a]},z_{[b]})W(z_{[b]},x_{[k]}) \dd y_{[a]} \dd z_{[b]}\nonumber\\
    & = & N_k^+(x_{[k]})^aN_k^-(x_{[k]})^b \int\frac{W(x_{[k]},y_{[a]})W(y_{[a]},z_{[b]})W(z_{[b]},x_{[k]})}{N_k^+(x_{[k]})^aN_k^-(x_{[k]})^b} \dd y_{[a]} \dd z_{[b]}\nonumber\\
    & \ge & N_k^+(x_{[k]})^a N_k^-(x_{[k]})^b\left(\int \frac{W(x_{[k]},y_{[a]})W(y_{[a]},z)W(z,x_{[k]})}{N_k^+(x_{[k]})^a N_k^-(x_{[k]})}\dd y_{[a]}\dd z\right)^b \nonumber\\
    & \ge & N_k^+(x_{[k]})^a N_k^-(x_{[k]})^bD_k(x_{[k]})^{ab}.
    \label{eq:Cabk3}
\end{eqnarray}

    Using that inequality (\ref{eq:Cabk3}) holds for every $x \in \Omega$, we estimate the homomorphism density of $C[a,b,k]$ as follows:
\begin{align*}
    t\left(C[a,b,k],W\right)
    & =    \int W(x_{[k]},y_{[a]})W(y_{[a]},z_{[b]})W(z_{[b]},x_{[k]}) \dd x_{[k]} \dd y_{[a]} \dd z_{[b]}\\
    & \ge  \int\limits_{\Omega}W(x_{[k]},y_{[a]})W(y_{[a]},z_{[b]})W(z_{[b]},x_{[k]}) \dd x_{[k]} \dd y_{[a]} \dd z_{[b]}\\
    & \ge  \int\limits_{\Omega}N_k^+(x_{[k]})^a D_k(x_{[k]})^{ab}N_k^-(x_{[k]})^b\dd x_{[k]}.
\end{align*}
    Since it holds that $N_k^+(x_{[k]})^a D_k(x_{[k]})^{ab} N_k^-(x_{[k]})^b=0$ for every $x \in [0,1]^k\setminus\Omega$,
we obtain that
    \[t\left(C[a,b,k],W\right)\ge\int N_k^+(x_{[k]})^a D_k(x_{[k]})^{ab} N_k^-(x_{[k]})^b\dd x_{[k]}.\]
Since the choice of $a,b,k\in\NN$ and a tournamenton $W$ was arbitrary, the proof of the lemma is completed.
\end{proof}

We are now ready to prove the key theorem of this section,
which we then use to prove Theorem~\ref{thm:Tabc}.

\begin{theorem}
\label{thm:Cabc}
Let $a,b,c\in\NN$ such that $a+b+c\ge 4$.
Every regular tournamenton $W$ satisfies that
\begin{equation}
t\left(C[a,b,c],W\right)\ge 2^{-ab-ac-bc}.
\label{eq:main}
\end{equation}
Moreover, the equality in \eqref{eq:main} holds if and only if $W\equiv 1/2$.
\end{theorem}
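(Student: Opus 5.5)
Throughout, $W$ is a regular tournamenton. The density of $C[a,b,c]$ is invariant under cyclic rotation of the three parts, so we are free to choose which part plays the role of the $k$-part in Lemma~\ref{lm:Cabk} and write the digraph as $C[a,b,k]$. The strategy is to compare $t(C[a,b,k],W)$ with $t(C[1,1,k],W)$ and $t(B[k],W)$, and then to invoke Lemma~\ref{lm:forcing}, following the sketch given in the introduction.

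\textbf{Step 1 (regular identity).} I will first show that $t(C[1,1,k],W)=t(B[k],W)/2$. Write $t(B[k],W)=\int N_k^+N_k^-\dd x_{[k]}$ and split it as $\int N_k^+ D_k N_k^-$ plus $\int N_k^+(1-D_k)N_k^-$, using $W(y,z)+W(z,y)=1$. The second integral is the density of the digraph obtained from $C[1,1,k]$ by reversing the edge between the two singleton parts. The task is to show, using regularity, that the two densities coincide. For $k=1$ this is Corollary~\ref{cor:triangle} together with $t(B[1],W)=1/4$. For general $k$ I would expand the difference as an integral against the functions $\int W(x,z)\dd z-1/2$, which vanish almost everywhere. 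I am not fully confident this expansion closes, and I treat it as the first point to verify.

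\textbf{Step 2 (H\"older).} Put $X=(N_k^+)^a D_k^{ab}(N_k^-)^b$. Choose the rotation so that $a\ge b$, and factor
\[N_k^+D_kN_k^-=X^{1/(ab)}\,(N_k^+N_k^-)^{1-1/b}\,(N_k^-)^{1/b-1/a}.\]
Apply H\"older's inequality with exponents $ab$ and $(1-1/b)^{-1}$ to the first two factors. For the third factor, use the pointwise bound $N_k^-\le\int W(z,x_1)\dd z=1/2$. Combined with Lemma~\ref{lm:Cabk} and Step 1, this gives
\[t(B[k],W)/2\le t(C[a,b,k],W)^{1/(ab)}\,t(B[k],W)^{1-1/b}\,2^{-(1/b-1/a)},\]
that is,
\[t(C[a,b,k],W)\ge 2^{-ab+a-b}\,t(B[k],W)^a.\]
Lemma~\ref{lm:forcing} (when $k\ge2$) then yields $t(C[a,b,k],W)\ge 2^{-ab-2ka+a-b}$. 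This matches the target $2^{-ab-ak-bk}$ exactly when $k(a-b)\le a-b$, i.e.\ when $a=b$ or $k=1$.

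\textbf{Step 3 (choosing the rotation) and the main obstacle.}
\begin{itemize}
\item If two parts are equal, place them as $a=b$; the bound above is then sharp. Equality forces equality in Lemma~\ref{lm:forcing}, hence $W\equiv1/2$, provided $k\ge2$.
\item If some part equals $1$, take it as $k$. Then $t(B[1],W)=1/4$, and the equality analysis must instead track the equality cases of the Jensen steps in Lemma~\ref{lm:Cabk} and of H\"older's inequality. Since $a+b+c\ge4$, we have $a\ge2$, and equality should force $D_1\equiv1/2$ and $N_1^\pm\equiv1/2$, which is again the quasirandom property $P_4$.
\end{itemize}
The main obstacle is the general case with three distinct parts all at least $2$. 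There the crude bound $N_k^-\le1/2$ is too lossy. I would first try to replace it by a three-factor H\"older estimate in which $(N_k^-)^{1/b-1/a}$ is controlled through $\int N_k^-\dd x_{[k]}=2^{-k}$, which is a regular-case identity to be checked. Failing that, I would use a second application of Lemma~\ref{lm:Cabk} with a different rotation and interpolate between the resulting inequalities. Finally, in every case the equality statement reduces to equality in Lemma~\ref{lm:forcing} or to $P_4$, exactly as in its proof.
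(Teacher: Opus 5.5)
Your outline is the same as the paper's: Lemma~\ref{lm:Cabk}, a H\"older step, the identity $t(C[1,1,k],W)=t(B[k],W)/2$, and then Lemma~\ref{lm:forcing}. As written, however, it does not prove the theorem. The case of three distinct parts is left open, Step~1 is unverified, and the $k=1$ equality argument is wrong.

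\textbf{The main gap (Step~2).} The pointwise bound $N_k^-\le 1/2$ throws away a factor $2^{(k-1)(a-b)}$. The repair you list as a first attempt is exactly what is needed, and it closes with no loss.
Keep $(N_k^-)^{1/b-1/a}$ as a third H\"older factor, with exponents $ab$, $b/(b-1)$ and $ab/(a-b)$. The reciprocals sum to $(ab-b+1)/(ab)\le 1$, with the obvious convention when $b=1$ or $a=b$. Then use the regularity identity $\int N_k^-\dd x_{[k]}=\int\bigl(\int W(z,x)\dd x\bigr)^k\dd z=2^{-k}$. Together with Lemma~\ref{lm:Cabk} and Step~1 this gives
\[t(C[a,b,k],W)\ge 2^{-ab+k(a-b)}\,t(B[k],W)^a\ge 2^{-ab-ak-bk}\]
for every $a\ge b$ and $k\ge 2$.
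The paper does the mirror image: $a\le b$, third factor $(N_c^+)^{(b-a)/(ab)}$, and $\int N_c^+=2^{-c}$.
Rotate so that the smallest part sits in position $b$, and if the minimum is attained twice, so that it sits at positions $a$ and $b$. This always gives $a\ge b$ and $k\ge 2$ when $a+b+k\ge 4$. Equality then reduces directly to Lemma~\ref{lm:forcing}, and no case split is needed. Your own case split is incomplete anyway: for example, $C[2,2,1]$ with $a=b$ forces $k=1$.

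\textbf{Step~1.} The missing idea is that $G(x,y)=\int W(x,z)W(z,y)\dd z$ is symmetric when $W$ is regular. Expanding $G(y,x)=\int(1-W(z,y))(1-W(x,z))\dd z$ gives $G(x,y)$ plus $1-\int W(x,z)\dd z-\int W(z,y)\dd z$, and the latter is $0$. Since $t(C[1,1,k],W)=\int G(x,y)^kW(y,x)\dd x\dd y$, averaging with the swap of $x$ and $y$ yields $\frac12\int G^k=t(B[k],W)/2$. Your proposed expansion against $\int W(x,z)\dd z-1/2$ is this computation, but it has to be carried out.

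\textbf{The $k=1$ equality argument.} It cannot work as stated. For every regular tournamenton, $N_1^\pm\equiv 1/2$ almost everywhere. Also $\int W(x,y)W(y,z)W(z,x)\dd y\dd z=1/8$ for almost every $x$, so $D_1\equiv 1/2$ as well. These conditions are therefore not $P_4$ and do not force $W\equiv 1/2$; you would need the equality conditions of the inner Jensen step instead. With the rotation above, this case never arises.
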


\begin{proof}
Fix $a,b,c\in\NN$ such that $a+b+c\ge 4$ and a regular tournamenton $W$.
Note that \[t\left(C[a,b,c],W\right)=t\left(C[b,c,a],W\right)=t\left(C[c,a,b],W\right);\]
so we can assume by this rotational symmetry that $a$ is the smallest among $a$, $b$ and $c$, and
if the smallest value is not unique among $a$, $b$ and $c$, then it additionally holds that $a=b$.
Observe that $c\ge 2$ (if $c=1$, then $a=b=1$, which is impossible as $a+b+c\ge 4$).

As in Lemma~\ref{lm:forcing}, let $G:[0,1]^2\to [0,1]$ be defined as
\[G(x,y)=\int W(x,z)W(z,y)\dd z;\]
informally speaking, $G(x,y)$ is the density of directed paths from $x$ to $y$ of length two.
Also note that $G$ is the square of $W$ in the operator sense.
We observe that the regularity of $W$ implies that
the function $G$ is symmetric, i.e., it holds that
\begin{eqnarray*}
G(y,x) &=& \int W(y,z)W(z,x)\dd z \\
         &=& \int (1-W(z,y))(1-W(x,z))\dd z \\
	 &=& 1 -\int W(x,z)\dd z-\int W(z,y)\dd z+\int W(x,z)W(z,y)\dd z \\
	 &=& 1-\frac{1}{2}-\frac{1}{2}+\int W(x,z)W(z,y)\dd z=G(x,y)
\end{eqnarray*}
Recall that $B[c]$ is the digraph obtained from $C[1,1,c]$
by removing the edge between the two vertices contained in the parts of size one.
The definition of $G$ yields that
\begin{eqnarray*}
t\left(B[c],W\right) & = & \int G(x,y)^c\dd x\dd y \mbox{ and}\\
t\left(C[1,1,c],W\right) & = & \int G(x,y)^c W(y,x) \dd x\dd y.
\end{eqnarray*}
Since the function $G(x,y)$ is symmetric and $W(x,y)+W(y,x)=1$ for all $(x,y)\in [0,1]^2$,
we obtain that
\begin{eqnarray}
t\left(C[1,1,c],W\right) & = & \frac{1}{2} t\left(C[1,1,c],W\right) + \frac{1}{2} t\left(C[1,1,c],W\right) \nonumber \\
                         & = & \frac{1}{2} \int G(x,y)^c W(y,x) \dd x \dd y + \frac{1}{2} \int G(y,x)^c W(x,y)\dd x \dd y \nonumber \\
			 & = & \frac{1}{2} \int G(x,y)^c W(y,x) +  G(y,x)^c W(x,y)\dd x \dd y \nonumber \\
			 & = & \frac{1}{2} \int G(x,y)^c W(y,x) +  G(x,y)^c W(x,y)\dd x \dd y \nonumber \\
			 & = & \frac{1}{2} \int G(x,y)^c\dd x\dd y = \frac{t\left(B[c],W\right)}{2}.
\label{eq:CBhalf}
\end{eqnarray}

Recall the definitions of the functions $N_c^+:[0,1]^c\to [0,1]$, $N_c^-:[0,1]^c\to [0,1]$ and $D_c : [0,1]^c \to [0,1]$
given before the statement of Lemma~\ref{lm:Cabk}.
Also recall that $S^+[c]$ is the directed $c$-leaf star with the center vertex being the source and
$S^-[c]$ is the directed $c$-leaf star with the center vertex being the sink.
Observe that the following four identities hold (we use that $W$ is regular in the last two):
\begin{eqnarray}
    t\left(B[c],W\right) & = & \int N_c^+(x_{[c]})N_c^-(x_{[c]})\dd x_{[c]}, \label{eq:Bc} \\
t\left(C[1,1,c],W\right) & = & \int N_c^+(x_{[c]})D_c(x_{[c]})N_c^-(x_{[c]})\dd x_{[c]}, \label{eq:C11c} \\
t\left(S^+[c],W\right) & = & 2^{-c} = \int N_c^-(x_{[c]})\dd x_{[c]}\mbox{ and} \label{eq:Nc-} \\
t\left(S^-[c],W\right) & = & 2^{-c} = \int N_c^+(x_{[c]})\dd x_{[c]}. \label{eq:Nc+}
\end{eqnarray}

We now apply generalized H\"older's Inequality with $p_1=ab$, $p_2=\frac{ab}{ab-b}$ and $p_3=\frac{ab}{b-a}$ (recall that $a\le b$)
to obtain the following inequality:
\begin{eqnarray}
\int N_c^+(x_{[c]})D_c(x_{[c]})N_c^-(x_{[c]})\dd x_{[c]}
& \le & \left(\int N_c^+(x_{[c]})^a D_c(x_{[c]})^{ab} N_c^-(x_{[c]})^b\dd x_{[c]}\right)^{\frac{1}{ab}} \times \nonumber \\
& & \left(\int N_c^+(x_{[c]})N_c^-(x_{[c]})\dd x\right)^{\frac{ab-b}{ab}} \times \nonumber \\
& & \left(\int N_c^+(x_{[c]})\dd x\right)^{\frac{b-a}{ab}}.
\label{eq:Holder1}
\end{eqnarray}
We plug \eqref{eq:Bc}, \eqref{eq:C11c} and \eqref{eq:Nc+} to \eqref{eq:Holder1} and get that
\begin{eqnarray*}
t\left(C[1,1,c],W\right)
& \le & \left(\int N_c^+(x_{[c]})^a D_c(x_{[c]})^{ab} N_c^-(x_{[c]})^b\dd x_{[c]}\right)^{\frac{1}{ab}} \times \\
& & t\left(B[c],W\right)^{\frac{ab-b}{ab}} \times 2^{\frac{-c(b-a)}{ab}},
\end{eqnarray*}
which yields by Lemma~\ref{lm:Cabk} that
\begin{equation}
t\left(C[1,1,c],W\right) \le
t\left(C[a,b,c],W\right)^{\frac{1}{ab}} \times
t\left(B[c],W\right)^{\frac{ab-b}{ab}} \times 2^{\frac{-c(b-a)}{ab}}.
\label{eq:Holder2}
\end{equation}
The inequality \eqref{eq:Holder2} is equivalent to
\begin{equation}
t\left(C[a,b,c],W\right)
\ge
\frac{t\left(C[1,1,c],W\right)^{ab}}{2^{-c(b-a)}\times t\left(B[c],W\right)^{ab-b}}.
\label{eq:Holder3}
\end{equation}
Using \eqref{eq:CBhalf} and Lemma~\ref{lm:forcing},
we manipulate the right side of \eqref{eq:Holder3} as follows:
\[\frac{t\left(C[1,1,c],W\right)^{ab}}{2^{-c(b-a)}\times t\left(B[c],W\right)^{ab-b}}=
  \frac{2^{-ab}\times t\left(B[c],W\right)^b}{2^{ac-bc}}
  \ge\frac{2^{-ab-2bc}}{2^{ac-bc}}
   =2^{-ac-ab-bc};
\]
note that Lemma~\ref{lm:forcing} yields that the equality above holds if and only if $W\equiv 1/2$.
We conclude that
\[2^{-ac-ab-bc}\le t\left(C[a,b,k],W\right)\]
and the equality holds if and only if $W\equiv 1/2$.
\end{proof}

Before proving our main theorem, we need an auxiliary lemma.
Informally speaking, we use that transitive tournaments have the Sidorenko property
to show that adding a transitive tournament on a vertex set formed by twins in a digraph $H$
drops density of $H$ by at most the expected homomorphism density of the added transitive tournament in a random tournament.

\begin{lemma}
\label{lm:trans}
Let $H$ be a digraph that contains an independent set $A$ such that all vertices in $A$ are twins, and
let $H'$ be the digraph obtained from $H$ by adding the $|A|$-vertex transitive tournament on $A$.
It holds that
\[t(H',W)\ge 2^{-\binom{|A|}{2}}\;t(H,W).\]
for every tournamenton $W$.
\end{lemma}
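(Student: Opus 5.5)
Write $m=|A|$ and $R=V(H)\setminus A$. The plan is to condition on the images of the vertices outside $A$. Because the vertices of $A$ are twins and $A$ is independent, their images are then chosen independently from the same measure. That measure is supported on the common neighbourhood and can be viewed as a scaled tournamenton, so Proposition~\ref{prop:trans} applies to it.

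Fix $x_R\in[0,1]^R$. Let $N^+$ and $N^-$ be the common in- and out-neighbourhoods in $H$ of the vertices of $A$; these are the same for every vertex of $A$ since they are twins. Set
\[f(y)=\prod_{u\in N^-}W(x_u,y)\prod_{w\in N^+}W(y,w)\]
where the second product runs over $w$ with $x_w$ in place of $w$. Let $g(x_R)$ be the product of $W$ over the edges of $H$ inside $R$. Since $A$ is independent, every edge of $H$ lies inside $R$ or joins $R$ to $A$. Hence
\[t(H,W)=\int g(x_R)\Big(\int f(y)\dd y\Big)^m\dd x_R,\]
and, with $A=\{v_1,\dots,v_m\}$ ordered along the added transitive tournament,
\[t(H',W)=\int g(x_R)\int\prod_{i\in[m]}f(y_i)\prod_{i<j}W(y_i,y_j)\dd y_{[m]}\dd x_R.\]
It therefore suffices to prove, for each fixed $x_R$, the pointwise inequality
\[\int\prod_i f(y_i)\prod_{i<j}W(y_i,y_j)\dd y_{[m]}\ \ge\ 2^{-\binom m2}\Big(\int f\Big)^m,\]
and then integrate it against $g\ge 0$.

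To prove the pointwise inequality, assume $Z=\int f>0$; otherwise both sides vanish. Then $\mu=f\dd y/Z$ is a probability measure on $[0,1]$, and the inequality reads $t(T_m,(W,\mu))\ge 2^{-\binom m2}$. Here $t(T_m,(W,\mu))$ is the density of $T_m$ in the tournamenton $W$ with the ground measure replaced by $\mu$. Proposition~\ref{prop:trans} is stated only for the uniform measure, so I would transfer it by a measure-preserving reparametrisation. Let $\phi:[0,1]\to[0,1]$ be the quantile map of $\mu$, so that $\phi$ pushes Lebesgue measure forward to $\mu$, and define $W'(s,t)=W(\phi(s),\phi(t))$. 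On the diagonal set where $\phi(s)=\phi(t)$, which can have positive measure when $\mu$ has atoms, the antisymmetry condition would force $W(z,z)=1/2$. There I would instead set $W'(s,t)=1$ for $s<t$ and $0$ otherwise, or simply $1/2$. Then $W'$ is a genuine tournamenton, and $t(T_m,W')$ differs from $t(T_m,(W,\mu))$ only on the set where two coordinates share an atom.

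I expect this atom/diagonal issue to be the main obstacle. The cleanest route would be to choose the diagonal values so that the modification can only decrease the density. Alternatively, I would note that, since Proposition~\ref{prop:trans} holds for every tournamenton and the density is continuous, one can first perturb $\mu$ to be atomless, for instance by splitting each atom into an interval carrying a transitive or constant-$1/2$ tournamenton, and then pass to the limit. Once the pointwise inequality holds for every $x_R$ with $Z>0$, multiplying by $g(x_R)Z^m$ and integrating yields $t(H',W)\ge 2^{-\binom m2}t(H,W)$, as claimed.
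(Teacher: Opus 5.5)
Your proof is correct, but it takes a different route from the paper. The paper does not use Proposition~\ref{prop:trans}. It inducts on $|A|$ and obtains the case $|A|=2$ as the exact identity $t(H',W)=t(H,W)/2$ by swapping $z_1$ and $z_2$. For $|A|\ge 3$ it passes through the intermediate digraph $H''$ (edges from $a_1$ to the rest of $A$). It applies induction to the twins $A\setminus\{a_1\}$ in $H''$ and bounds $t(H'',W)\ge 2^{-|A|+1}t(H,W)$ by H\"older's Inequality. This is the standard proof of the Sidorenko property of transitive tournaments, carried out relative to the rest of $H$; taking $H$ edgeless on $A$ recovers the inequality of Proposition~\ref{prop:trans}. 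So it is self-contained and needs no change of measure.

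You instead condition on $x_R$ and note that the images of the twins are then i.i.d.\ with law $\mu=f\dd y/Z$. You then invoke Proposition~\ref{prop:trans} as a black box after a quantile reparametrisation. This is shorter and more modular: the same argument shows that any digraph $K$ with $t(K,W)\ge 2^{-|E(K)|}$ for every tournamenton can be placed on a set of independent twins at a cost of at most a factor $2^{-|E(K)|}$. The price is that it rests on the classical proposition, which the paper only cites.

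The step you flag as the main obstacle is not actually an obstacle. Since $\mu$ has density $f/Z$ with respect to Lebesgue measure $\lambda$, it has no atoms. Its distribution function is therefore continuous, $\phi$ is strictly increasing on $(0,1)$, and $\{(s,t):\phi(s)=\phi(t)\}$ is a null set. Moreover, the paper's definition of a tournamenton requires $W(x,y)+W(y,x)=1$ for all pairs, including $x=y$. Hence $W(z,z)=1/2$, and $W'=W\circ(\phi\times\phi)$ is already a tournamenton with no modification. The pushforward identity $(\phi^{\times m})_*\lambda^{\otimes m}=\mu^{\otimes m}$ then gives $t(T_m,W')=\int\prod_{i<j}W(y_i,y_j)\dd\mu^{\otimes m}(y)$ exactly. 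So no redefinition on the diagonal and no perturbation or limiting argument is needed.
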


\begin{proof}
The proof will proceed by induction on the size of the set $A$.
Before presenting the proof, we introduce some notation.
Consider a digraph $H$, an independent set $A$ as in the statement of the lemma and let $a \in A$ be any vertex of $A$.
Let $B$ be the set of the vertices of $H$ not contained in $A$.
For a tournamenton $W$,
we define two functions $F_W:[0,1]^B\times [0,1]\to [0,1]$ and $G_W:[0,1]^B\to [0,1]$ as follows:
\begin{align*}
    F_W(x_{B},z) & =\prod_{va\in E(H) \cap (B \times \{a\})}W(x_v,z)\prod_{av\in E(H) \cap (\{a\} \times B)}W(z,x_v)\mbox{ and}\\
G_W(x_{B}) & =\prod_{uv\in E(H)\cap (B\times B)} W(x_u,x_v).
\end{align*}
Note that the definition of $F_W(x_{B},z)$ does not depend on the choice of $a\in A$ as
all vertices contained in $A$ are twins.
Observe that
\[t(H,W)=\int G_W(x_B)\prod_{a\in A}F_W(x_B,z_a)\dd x_B\dd z_A\]
for any tournamenton $W$.

We are now ready to present the proof of the lemma,
in which we use the above introduced notation.

If $|A|=1$, then the statement holds trivially as $H=H'$.
We next analyze the case $|A|=2$.
The definition of $H'$ implies that
\[t(H',W)=\int G_W(x_B)F_W(x_B,z_1)F_W(x_B,z_2)W(z_1,z_2)\dd x_B\dd z_1\dd z_2.\]
Since the role of $z_1$ and $z_2$ in the above expression is symmetric we also have that
\[t(H',W)=\int G_W(x_B)F_W(x_B,z_1)F_W(x_B,z_2)W(z_2,z_1)\dd x_B\dd z_1\dd z_2,\]
which yields using the identity $W(z_1,z_2)+W(z_2,z_1)=1$ for all $(z_1,z_2)\in [0,1]^2$ that
\[2t(H',W)=\int G_W(x_B)F_W(x_B,z_1)F_W(x_B,z_2)\dd x_B\dd z_1\dd z_2=t(H,W).\]
This concludes the proof of the case $|A|=2$.
Note that we have proven that the inequality always holds with equality when $|A|=2$.

We now establish the induction step in the case $|A|\ge 3$.
Let $a_1,\ldots,a_{|A|}$ be the vertices of $A$ listed in the order that is consistent with the transitive tournament on $A$ in $H'$, and
let $H''$ be the digraph obtained from $H$ by adding an edge directed from $a_1$ to each of the vertices $a_2,\ldots,a_{|A|}$.
Note that the vertices of $A\setminus\{a_1\}$ are twins in $H''$.
By the induction hypothesis applied to the digraph $H''$ with the digraph $H'$ and the independent set $A \setminus \{a_1\}$,
we obtain that
\begin{equation}
t(H',W)\ge 2^{-\binom{|A|-1}{2}}\;t(H'',W).
\label{eq:trans1}
\end{equation}
We next apply the induction to the digraph $H$ and the digraph obtained from $H$ by adding the edge directed from $a_1$ to $a_2$,
i.e., we invoke the case when the size of the independent set is two and the inequality holds with equality to obtain that
\[\int G_W(x_B)\left(\prod_{a\in A}F_W(x_B,z_a)\right)W(z_{a_1},z_{a_2})\dd x_B\dd z_A=\frac{t(H,W)}{2}.\]
We now apply H\"older's Inequality to derive that
\begin{align*}
  &\frac{t(H,W)}{2}=\int G_W(x_B)\left(\prod_{a\in A}F_W(x_B,z_a)\right)W(z_{a_1},z_{a_2})\dd x_B\dd z_A\\
= & \int G_W(x_B)F_W(x_B,z_1)\left(\int F_W(x_B,z)W(z_1,z)\dd z\right)\left(\int F_W(x_B,z)\dd z\right)^{|A|-2}\dd x_B\dd z_1 \\
\le & \left(\int G_W(x_B)F_W(x_B,z_1)\left(\int F_W(x_B,z)W(z_1,z)\dd z\right)^{|A|-1}\dd x_B\dd z_1\right)^{\frac{1}{|A|-1}} \times \\
& \left(\int G_W(x_B)F_W(x_B,z_1)\left(\int F_W(x_B,z)\dd z\right)^{|A|-1}\dd x_B\dd z_1\right)^{\frac{|A|-2}{|A|-1}} \\
= & \; t(H'',W)^{\frac{1}{|A|-1}}\times t(H,W)^{\frac{|A|-2}{|A|-1}},
\end{align*}
which yields that
\begin{equation}
2^{-|A|+1}\;t(H,W) \le t(H'',W).
\label{eq:trans2}
\end{equation}
The inequalities \eqref{eq:trans1} and \eqref{eq:trans2} combine to
\[t(H',W)\ge 2^{-\binom{|A|-1}{2}-|A|+1}\;t(H,W)=2^{-\binom{|A|}{2}}\;t(H,W).\]
This completes the induction step.
\end{proof}

We are now ready to prove the main theorem of this section, which in combination with Theorem~\ref{thm:constr} readily yields Theorem~\ref{thm:main}.

\begin{theorem}
\label{thm:Tabc}
Let $a,b,c\in\NN$ such that $a+b+c\ge 4$.
Every regular tournamenton $W$ satisfies that
\[t(T[a,b,c],W)\ge 2^{-\binom{a+b+c}{2}},\]
and the equality holds if and only if $W\equiv 1/2$.
\end{theorem}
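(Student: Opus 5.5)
The plan is to derive Theorem~\ref{thm:Tabc} from Theorem~\ref{thm:Cabc} by adding the transitive tournaments inside the three parts one at a time, each time using Lemma~\ref{lm:trans}. Fix $a,b,c$ with $a+b+c\ge 4$ and a regular tournamenton $W$. In $C[a,b,c]$ the part of size $a$ is an independent set of twins, so let $H_1$ be the digraph obtained by adding the $a$-vertex transitive tournament on it. Lemma~\ref{lm:trans} gives
\[t(H_1,W)\ge 2^{-\binom{a}{2}}\,t(C[a,b,c],W).\]
The part of size $b$ is still an independent set of twins in $H_1$, because the added edges lie inside the first part and every vertex of the $b$-part sees all of the first part in the same way. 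So let $H_2$ be $H_1$ with the $b$-vertex transitive tournament added on the second part; Lemma~\ref{lm:trans} contributes a further factor $2^{-\binom{b}{2}}$. Doing the same for the third part turns $H_2$ into $T[a,b,c]$. Chaining the three inequalities with \eqref{eq:main} gives
\[t(T[a,b,c],W)\ge 2^{-\binom a2-\binom b2-\binom c2}\,t(C[a,b,c],W)\ge 2^{-\binom a2-\binom b2-\binom c2-ab-bc-ca}=2^{-\binom{a+b+c}{2}}.\]

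For the equality case, if $W\equiv 1/2$ then a direct computation gives equality. Conversely, suppose equality holds for a regular $W$. Every inequality in the chain must then be tight, and in particular $t(C[a,b,c],W)=2^{-ab-bc-ca}$. Here I must make sure the first chain inequality cannot be tight with a strictly positive slack elsewhere. Since all factors are nonnegative and the chain is
\[\text{LHS}\ge \alpha\, t(C[a,b,c],W)\ge \alpha\, 2^{-ab-bc-ca},\]
equality of the ends forces $t(C[a,b,c],W)=2^{-ab-bc-ca}$. The equality part of Theorem~\ref{thm:Cabc}, which applies because $a+b+c\ge 4$, then yields $W\equiv 1/2$.

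The only delicate point is checking that the twin and independence hypotheses of Lemma~\ref{lm:trans} survive the earlier additions. This holds because twins are defined only through neighbourhoods outside the set, and edges added inside another part do not distinguish the vertices of the current part. Everything else is bookkeeping: using $\binom{a+b+c}{2}=\binom a2+\binom b2+\binom c2+ab+bc+ca$ and noting that regularity of $W$ is needed only in the application of Theorem~\ref{thm:Cabc}.
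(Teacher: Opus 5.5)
Your proposal is correct and follows essentially the same route as the paper: apply Lemma~\ref{lm:trans} once per part to go from $C[a,b,c]$ to $T[a,b,c]$, then chain with Theorem~\ref{thm:Cabc}, using $\binom{a+b+c}{2}=\binom a2+\binom b2+\binom c2+ab+bc+ca$. The only difference is in the equality case: the paper argues contrapositively, taking $W\not\equiv 1/2$ and propagating the strict inequality from Theorem~\ref{thm:Cabc}, whereas you force equality in that theorem using that the prefactor $2^{-\binom a2-\binom b2-\binom c2}$ is positive; these are equivalent.
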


\begin{proof}
Fix $a,b,c\in\NN$ such that $a+b+c\ge 4$ and a regular tournamenton $W$.
If $W\equiv 1/2$, then $t(T[a,b,c],W)=2^{-\binom{a+b+c}{2}}$.
We assume that $W\not\equiv 1/2$ for the rest of the proof and
show that $t(T[a,b,c],W)>2^{-\binom{a+b+c}{2}}$.
Theorem~\ref{thm:Cabc} implies that
\begin{equation}
t(C[a,b,c],W)>2^{-ab-ac-bc}.
\label{eq:Tabc0}
\end{equation}
Let $T_1$ be the digraph obtained from $C[a,b,c]$ by adding the $a$-vertex transitive tournament on the part of size $a$ and
let $T_2$ be the digraph obtained from $T_1$ by adding the $b$-vertex transitive tournament on the part of size $b$;
observe that $T[a,b,c]$ is the digraph obtained from $T_2$ by adding the $c$-vertex transitive tournament on the part of size $c$.
Lemma~\ref{lm:trans} yields the following inequalities:
\begin{align*}
t(T_1,W) & \ge 2^{-\binom{a}{2}}\; t(C[a,b,c],W),\\
t(T_2,W) & \ge 2^{-\binom{b}{2}}\; t(T_1,W) \mbox{ and}\\
t(T[a,b,c],W) & \ge 2^{-\binom{c}{2}}\; t(T_2,W).
\end{align*}
We now combine these three inequalities with \eqref{eq:Tabc0} to obtain that
\begin{align*}
t(T[a,b,c],W) & \ge 2^{-\binom{a}{2}-\binom{b}{2}-\binom{c}{2}}\;t(C[a,b,c],W)\\
              & >2^{-\binom{a}{2}-\binom{b}{2}-\binom{c}{2}-ab-ac-bc}=2^{-\binom{a+b+c}{2}},
\end{align*}
which completes the proof of the theorem.
\end{proof}

\section{The anti-Sidorenko property for regular $W$}
\label{sec:antiSidorenko}

In this section, we give a proof of Theorem~\ref{thm:max} using an argument similar to the one in~\cite{BucLSS21} that showed there are only finitely many anti-Sidorenko tournaments in the setting of arbitrary tournament sequences.
The following Proposition is a direct translation of Theorem~\ref{thm:max} to the language of combinatorial limits.

\begin{proposition}
\label{prop:max}
For every $n$-vertex tournament $H$ with $n\ge 10$ there exists a regular tournamenton $W$ such that
\[t(H,W)>2^{-\binom{n}{2}}.\]
\end{proposition}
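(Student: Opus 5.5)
The plan is to adapt the argument of Buci\'c, Long, Shapira and Sudakov~\cite{BucLSS21} and perturb the constant tournamenton by a suitable regular deformation. We then compare the first-order (or second-order) change of $t(H,\cdot)$ with zero. Concretely, we look for a family of regular tournamentons $W_\varepsilon=1/2+\varepsilon U$. Here $U$ is a bounded antisymmetric kernel, i.e., $U(x,y)=-U(y,x)$, with $\int U(x,y)\dd y=0$ for almost every $x$. Such a $W_\varepsilon$ is a regular tournamenton for $|\varepsilon|$ small. Expanding \eqref{eq:tHW} gives
\[t(H,W_\varepsilon)=\sum_{F\subseteq E(H)}2^{-|E(H)|+|F|}\varepsilon^{|F|}\,t_U(F),\]
where $t_U(F)$ is the density of the subgraph with edge set $F$, weighted by $U$ with orientations taken from $H$. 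Every $F$ containing a vertex of degree one vanishes because of the zero row/column sums of $U$. Hence the first nonzero terms come from cycles: the triangles of $H$ (order $\varepsilon^3$) and the $4$-cycles (order $\varepsilon^4$).

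The key steps are as follows. First, choose $U$ so that the cubic term vanishes while the quartic term is strictly positive. For example, take $U$ to be a random-like antisymmetric step function, or $U(x,y)=\sin(2\pi(x-y))$. For the latter, triangle densities $\int U(x,y)U(y,z)U(z,x)$ vanish, and $4$-cycle densities depend only on the cyclic orientation pattern. Alternatively, we can use both $U$ and $-U$: the cubic term is odd in $\varepsilon$, so if it is nonzero, one sign of $\varepsilon$ already gives $t(H,W_\varepsilon)>2^{-\binom n2}$. So we may assume the cubic coefficient $\sum_{\text{triangles}} t_U$ is zero for every admissible $U$ and pass to order four.

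Second, compute the $\varepsilon^4$ coefficient. It is a sum over $4$-cycles of $K_n$ of $\pm t_U(C_4)$, with the sign determined by how many edges of the cycle are oriented against the cyclic direction in $H$, plus contributions from pairs of disjoint or vertex-sharing edges. The latter are not cycles but can be nonzero. They are actually of the form $\int U(x,y)^2$ for a doubled edge? No: each edge of $H$ appears once, so only genuine subgraphs of minimum degree at least two contribute. At order four these are exactly the $4$-cycles. With $U=\sin(2\pi(x-y))$ the value $t_U(\vec C_4)$ is a positive constant $\kappa$, and a $4$-cycle with two consecutive reversed edges changes by a sign according to parity. The resulting coefficient is $\kappa\cdot(\#\text{even-parity }4\text{-cycles}-\#\text{odd-parity }4\text{-cycles})$ for a suitable parity convention.

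Third, and this is the main obstacle, show that for every tournament on $n\ge 10$ vertices this signed count is nonzero, or find another perturbation making the relevant low-order coefficient positive. Following~\cite{BucLSS21}, I would instead use a more flexible $U$: a step function on $m$ parts given by an antisymmetric matrix $M$ with zero row sums. The leading coefficient then becomes a polynomial in $M$. I would take $M$ to be a random such matrix, or a rescaling of a fixed one, and argue that the sum over all $4$-vertex subtournaments of $H$ of a local quantity is bounded below by something like $\binom n4-c\,n^3$. This uses that in any tournament most $4$-sets have a fixed pattern of parity in expectation, which is where the threshold $n\ge 10$ should enter. If the parity count is forced to be positive for one choice and negative for the other (e.g., $U$ versus a blow-up-based $U$), I would choose the one producing a positive coefficient. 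I expect the counting inequality for $n\ge10$ to be the delicate part, likely requiring a short case check for small $n$ near the threshold.
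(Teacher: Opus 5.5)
Your perturbative strategy cannot prove the statement. The step you flag as the main obstacle is not merely delicate; it is false.

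\textbf{The leading coefficient has a sign fixed by $H$.} For antisymmetric $U$ with zero row sums, the cubic coefficient vanishes identically, not only for $\sin(2\pi(x-y))$. The substitution $(x,y,z)\mapsto(z,y,x)$ gives $\int U(x,y)U(y,z)U(z,x)=-\int U(x,y)U(y,z)U(z,x)$, so your ``one sign of $\varepsilon$'' case never occurs. At order four, each $4$-cycle of $K_n$ contributes $(-1)^{r}\tau(U)$. Here $r$ is the number of its edges oriented against a fixed cyclic direction, and $\tau(U)=\int\bigl(\int U(x,y)U(y,z)\dd y\bigr)^2\dd x\dd z$. The inner kernel is symmetric and equals the kernel of $-U^{*}U$, so $\tau(U)>0$ for every nonzero $U$. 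Summing over the three $4$-cycles on a $4$-set gives $+1$ for the transitive tournament and for $T[2,1,1]$, and $-3$ for $W_4$ and $L_4$. Hence
\[t\bigl(H,\tfrac12+\varepsilon U\bigr)=2^{-\binom n2}+2^{4-\binom n2}\,\varepsilon^4\,\tau(U)\,\sigma(H)+O(\varepsilon^5),\qquad \sigma(H)=\binom n4-4\bigl(\#W_4+\#L_4\bigr).\]
The sign of the leading coefficient therefore depends on $H$ alone. No choice of $U$, random step matrix $M$, or ``blow-up-based'' $U$ can change it.

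\textbf{There are counterexamples with $n\ge 10$.} Take the Paley tournament on $\ZZ_{11}$. It is doubly regular, so it has $55$ transitive $4$-sets, and $\#W_4=\#L_4=11\binom53-55=55$. This gives $\sigma=330-440=-110$; more generally, every Paley tournament on $q$ vertices has $\sigma=-q(q-1)(q-3)/8$. For such $H$, $t(H,\frac12+\varepsilon U)<2^{-\binom n2}$ for every fixed $U\neq0$ and all sufficiently small $\varepsilon\neq0$. So the constant tournamenton is a local maximum along every regular direction, no local argument around it can succeed, and your hoped-for lower bound $\binom n4-c\,n^3$ is false.

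\textbf{The missing idea is a global construction far from $1/2$.} The paper's argument, like the one it adapts from \cite{BucLSS21}, uses a blow-up of $H$ rather than a perturbation. Split $[0,1]$ into $A_1,\dots,A_n,B_1,\dots,B_n$, each of measure $1/(2n)$. Put a blow-up of $H$ on the $A_i$'s and another on the $B_i$'s. Orient all edges between $A_i$ and $B_j$ ($i\neq j$) against $H$, and set $W=1/2$ inside each $A_i\cup B_i$. The reversal makes $W$ regular: each $x\in A_i\cup B_i$ has out-measure $(n-1)/(2n)+1/(2n)=1/2$. The integrand is $1$ whenever $x_{v_i}\in A_i$ for all $i$, or $x_{v_i}\in B_i$ for all $i$, so $t(H,W)\ge 2(2n)^{-n}$. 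The threshold $n\ge 10$ is exactly where $2(2n)^{-n}>2^{-\binom n2}$; it comes from this comparison, not from any count of $4$-vertex patterns.
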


\begin{figure}
\begin{center}
\epsfbox{qitourn-9.mps}
\end{center}
\caption{The tournamenton $W$ from the proof of Proposition~\ref{prop:max}
         when $H$ is the tournament $W_4$ depicted in Figure~\ref{fig:special}.
	 The origin of the coordinate system is in the top left corner,
	 the division between the parts as defined in the proof is visualized by dotted lines, and
	 the division between the parts $A_i$'s and $B_i$'s by dashed lines.}
\label{fig:max}
\end{figure}

\begin{proof}
Fix a tournament $H$ with $n$ vertices $v_1,\ldots,v_n$.
We construct a regular tournamenton $W$ that satisfy the inequality given in the proposition.
Split the interval $[0,1]$ into $2n$ parts $A_1,\ldots,A_n$ and $B_1,\ldots,B_n$, each of measure $1/2n$.
The tournamenton $W$ is defined as follows:
\[
W(x,y)=\begin{cases}
       1 & \mbox{if $x\in A_i$, $y\in A_j$ and $v_iv_j \in E(H)$,} \\
       1 & \mbox{if $x\in B_i$, $y\in B_j$ and $v_iv_j \in E(H)$,} \\
       1 & \mbox{if $x\in A_i$, $y\in B_j$ and $v_jv_i \in E(H)$,} \\
       1 & \mbox{if $x\in B_i$, $y\in A_j$ and $v_jv_i \in E(H)$,} \\
       1/2 & \mbox{if $x\in A_i\cup B_i$ and $y\in A_i\cup B_i$, and} \\
       0 & \mbox{otherwise.}
       \end{cases}
\]
An example of the tournamenton $W$ when $H$ is the tournament $W_4$ from Figure~\ref{fig:special}
is given in Figure~\ref{fig:max}.
Observe that if either $x_{v_i}\in A_i$ for all $i\in [n]$ or $x_{v_i}\in B_i$ for all $i\in [n]$,
then the integrand in \eqref{eq:tHW} is equal to one.
It follows that
\[t(H,W)\ge 2\cdot (2n)^{-n}.\]
Since $2\cdot (2n)^{-n}>2^{-\binom{n}{2}}$ for $n\ge 10$, the statement of the proposition follows.
\end{proof}

\section{Conclusion}
\label{sec:concl}

In this paper, we characterized all the tournaments $H$ with the Sidorenko property for regular tournamentons.
Regarding any tournament $H$ with the anti-Sidorenko property for regular tournamentons, we proved that the number of vertices of $H$ must be at most nine.

We have computationally identified several tournaments (in addition to those
found by Noel, Ranganathan and Simbaqueba~\cite{NoeRS25}) with this property;
three such tournaments on six vertices are depicted in Figure~\ref{fig:T6}.
However, we do not have a conjectured list of all tournaments with the anti-Sidorenko property for regular tournamentons,
and we leave their characterization as an~open problem.

\begin{problem}
\label{prob:0}
Characterize which tournaments $H$ satisfy that
the constant tournamenton is the unique maximizer of $t(H,W)$ among regular tournamentons $W$.
\end{problem}

Note that a resolution of Problem~\ref{prob:0} together with Theorem~\ref{thm:main}
would yield a full solution of \cite[Problem 6.1]{NoeRS25}.

\begin{figure}
\begin{center}
\epsfbox{qitourn-3.mps}
\end{center}
\caption{Three $6$-vertex tournaments $H$ that we have computationally verified to have the anti-Sidorenko property for regular tournamentons.}
\label{fig:T6}
\end{figure}

\subsection{The Sidorenko property via entropy}
The entropy method has recently gained a prominent role in making progress on various problems in extremal combinatorics,
particularly Sidorenko type problems, see e.g.~\cite{BehMN24, ConKLL18b, ConKLL18, ConL17, Fit18, GrzLLV22, Lee21, LiS11, Par14, Sze14, ChaY24, ChaY24b, ChaY24c, ChaALY25}.
We found an alternative proof of Theorem~\ref{thm:Cabc} using the entropy method when $c=1$ and $a$ and $b$ are arbitrary
but we were unable to extend it to full generality.
Still, we want to sketch the argument.
We will assume that the reader is familiar with basic concepts concerning the use of the entropy method and
we refer to e.g.~\cite{Gal14,Gow15}
for the exposition in the setting of combinatorics.

Fix $a,b\in\NN$ and let $G$ be an $n$-vertex regular tournament.
It is well-known that $G$ has $\frac{n^3-n}{24}=\frac{1}{4}\binom{n}{3}+O(n^2)$ cyclically oriented triangles and
every vertex of $G$ is in exactly $\frac{n^2-1}{8}$ of such triangles,
i.e., every vertex is in the same number of cyclically oriented triangles.
We will estimate the number of homomorphisms from $T[a,b,1]$ to $G$.
Let $(x,y,z)$ be a cyclically oriented triangle of $G$ chosen uniformly at random so that $xy$, $yz$ and $zx$ are edges in $G$. Note that we consider $(x,y,z)$ and $(y,z,x)$ to be different as
we wish to count homomorphisms.
Note that $H(x,y,z)=\log\frac{n^3-n}{8}$ and
\[H(x,y)=H(y,z)=H(z,x)\le\log{n \choose 2}\le\log\frac{n^2}{2};\]
the first inequality in the displayed estimate follows from the fact that
the entropy is maximum for the uniform distribution on the ${n \choose 2}$ edges of $G$.
We now sample $(y,z)$ according to the marginal distribution coming from $(x,y,z)$ and
sample $x_1,\ldots,x_a$ as conditionally independent copies of $x$ given $(y, z)$.
In this way, we obtain a distribution on $(a+2)$-tuples $(x_1,\ldots,x_a,y,z)$ that
corresponds to homomorphisms from $C[a,1,1]$ to $G$.
Note that
\begin{equation}
H(x_1,\ldots,x_a,y,z)=aH(x| y, z)+H(y, z). \label{eq:concl1}
\end{equation}
We now sample $(x_1,\ldots,x_a,z)$ according to the marginal distribution
coming from the distribution of $(x_1,\ldots,x_a,y,z)$ and sample $y_1,\ldots,y_b$ as conditionally independent copies of $y$ given $(x_1, \ldots, x_a, z)$.
In this way, we obtain a distribution on $(a+b+1)$-tuples $(x_1,\ldots,x_a,y_1,\ldots,y_b,z)$ that
corresponds to homomorphisms from $C[a,b,1]$ to $G$ and
we can compute its entropy as follows:
\begin{equation}
H(x_1,\ldots,x_a,y_1,\ldots,y_b,z)=bH(y|x_1,\ldots,x_a,z)+H(x_1,\ldots,x_a,z). \label{eq:concl2}
\end{equation}
Since the $(a + 1)$-tuple $(x_1,\ldots,x_a,z)$ always induces the directed $a$-leaf star with the center vertex being the source and
the tournament $G$ is regular, we obtain that
\begin{equation}
H(x_1,\ldots,x_a,z)\le\log n\left(\frac{n-1}{2}\right)^a\le\log\frac{n^{a+1}}{2^a}.\label{eq:concl3}
\end{equation}
We now combine \eqref{eq:concl1}, \eqref{eq:concl2} and \eqref{eq:concl3} to obtain the following estimate
on the entropy of the constructed distribution on $(a+b+1)$-tuples $(x_1,\ldots,x_a,y_1,\ldots,y_b,z)$:
\begin{align*}
    H(x_1,&\ldots,x_a,y_1,\ldots,y_b,z) \\
& = b H(y|x_1,\ldots,x_a,z)+H(x_1,\ldots,x_a,z) \\
& = b H(x_1,\ldots,x_a,y,z) - (b-1) H(x_1,\ldots,x_a,z) \\
& = ab H(x|y,z) + b H(y,z) - (b-1) H(x_1,\ldots,x_a,z) \\
& = ab H(x,y,z) - (a-1)b H(y,z) - (b-1) H(x_1,\ldots,x_a,z) \\
&\ge ab\log\frac{n^3-n}{8} - (a-1)b\log\frac{n^2}{2}-(b-1) \log\frac{n^{a+1}}{2^a} \\
& = \log\frac{(n^3-n)^{ab}}{2^{3ab}}-\log\frac{n^{2ab-2b}}{2^{(a-1)b}}-\log\frac{n^{ab-a+b-1}}{2^{a(b-1)}}\\
& = \log\frac{n^{a+b+1}-O(n^{a+b-1})}{2^{ab+a+b}}.
\end{align*}
Comparing the entropy of the constructed distribution with the entropy of the uniform distribution yields that the number homomorphisms from $C[a, b, 1]$ to $G$ is at least $\frac{n^{a+b+1}(1+o(1))}{2^{ab+a+b}}$,
which implies that $t(C[a,b,1],W)\ge 2^{-ab-a-b}$ for any regular tournamenton $W$.
The argument presented above can be extended to show that the equality holds if and only if $W\equiv 1/2$.

\bibliographystyle{bibstyle}
\bibliography{qitourn}

@article {BolH90,
  title={Powers of {H}amilton cycles in tournaments},
  author={Bollob\'as, B\'ela and H\"agghkvist, Roland},
  journal={Journal of Combinatorial Theory, Series B},
  volume={50},
  year={1990},
  pages={309--318}
  }

@article {ErdS83,
    AUTHOR = {Erd\H{o}s, Paul and Simonovits, Mikl\'{o}s},
     TITLE = {Supersaturated graphs and hypergraphs},
   JOURNAL = {Combinatorica},
    VOLUME = {3},
      YEAR = {1983},
     PAGES = {181--192},
}

@article {Sid89,
    AUTHOR = {Sidorenko, A. F.},
     TITLE = {Cycles in graphs and functional inequalities},
   JOURNAL = {Akademiya Nauk SSSR. Matematicheskie Zametki},
    VOLUME = {46},
      YEAR = {1989},
     PAGES = {72--79, 104}
}

@article {Sid91,
    AUTHOR = {Sidorenko, A. F.},
     TITLE = {Inequalities for functionals generated by bipartite graphs},
   JOURNAL = {Diskretnaya Matematika},
    VOLUME = {3},
      YEAR = {1991},
     PAGES = {50--65}
}

@article{BehMN24,
  title={Off-diagonal commonality of graphs via entropy},
  author={Behague, Natalie and Morrison, Natasha and Noel, Jonathan A},
  journal={SIAM Journal on Discrete Mathematics},
  volume={38},
  number={3},
  pages={2335--2360},
  year={2024},
  publisher={SIAM}
}

@article {GrzIKK23,
    author={Grzesik, Andrzej and I{\v{l}}kovi{\v{c}}, Daniel and Kielak, Bart{\l}omiej and Kr{\'a}l’, Daniel},
     TITLE = {Quasirandom-forcing orientations of cycles},
   JOURNAL = {SIAM Journal on Discrete Mathematics},
      YEAR = {2023},
    NUMBER = {4},
     PAGES = {2689--2716}
}

@Article{ChuGW89,
  author   = {Chung, F. R. K. and Graham, R. L. and Wilson, R. M.},
  title    = {Quasi-random graphs},
  journal  = {Combinatorica},
  year     = {1989},
  volume   = {9},
  number   = {4},
  pages    = {345--362},
  issn     = {0209-9683}
}

@Article{Tho87,
  author    = {Thomason, Andrew},
  title     = {Pseudo-random graphs},
  journal   = {Annals of Discrete Mathematics},
  year      = {1987},
  volume    = {144},
  pages     = {307--331},
}

@InCollection{Tho87b,
  author    = {Thomason, Andrew},
  title     = {Random graphs, strongly regular graphs and pseudo-random graphs},
  booktitle = {Surveys in Combinatorics},
  publisher = {Cambridge Univ. Press},
  year      = {1987},
  volume    = {123},
  series    = {London Mathematical Society Lecture Note Series},
  pages     = {173--196},
}

@Article{Rod86,
  author   = {Vojt\v{e}ch R\"odl},
  title    = {On universality of graphs with uniformly distributed edges},
  journal  = {Discrete Mathematics},
  year     = {1986},
  volume   = {59},
  number   = {1},
  pages    = {125--134},
  issn     = {0012-365X},
  doi      = {https://doi.org/10.1016/0012-365X(86)90076-2},
  url      = {http://www.sciencedirect.com/science/article/pii/0012365X86900762},
}

@Article{Gow08,
  author    = {Gowers, W. T.},
  title     = {Quasirandom Groups},
  journal   = {Combinatorics, Probability and Computing},
  year      = {2008},
  volume    = {17},
  number    = {3},
  pages     = {363--387},
  doi       = {10.1017/S0963548307008826},
  publisher = {Cambridge University Press},
}

@Article{KraP13,
  author     = {Kr\'{a}l', Daniel and Pikhurko, Oleg},
  title      = {Quasirandom permutations are characterized by 4-point densities},
  journal    = {Geometric and Functional Analysis},
  year       = {2013},
  volume     = {23},
  number     = {2},
  pages      = {570--579},
  issn       = {1016-443X},
  doi        = {10.1007/s00039-013-0216-9},
  mrclass    = {05A05 (60C05)},
  mrnumber   = {3053756},
  mrreviewer = {Dmitry A. Shabanov},
}

@Article{Gow07,
  author     = {Gowers, W. T.},
  title      = {Hypergraph regularity and the multidimensional {S}zemer\'{e}di theorem},
  journal    = {Annals of Mathematics, Second Series},
  year       = {2007},
  volume     = {166},
  number     = {3},
  pages      = {897--946},
  issn       = {0003-486X},
  doi        = {10.4007/annals.2007.166.897},
  mrclass    = {05D10 (05C30 05C35 05C55 05C80 28D15)},
  mrnumber   = {2373376},
  mrreviewer = {G\'{a}bor N. S\'{a}rk\"{o}zy},
}

@Article{Gow06,
  author     = {Gowers, W. T.},
  title      = {Quasirandomness, counting and regularity for 3-uniform hypergraphs},
  journal    = {Combinatorics, Probability and Computing},
  year       = {2006},
  volume     = {15},
  pages      = {143--184},
  issn       = {0963-5483},
  doi        = {10.1017/S0963548305007236},
  mrclass    = {05D05 (05C35 05C65)},
  mrnumber   = {2195580},
  mrreviewer = {J\'{o}zsef Solymosi},
}

@Article{Coo04,
  author     = {Cooper, Joshua N.},
  title      = {Quasirandom permutations},
  journal    = {Journal of Combinatorial Theory, Series A},
  year       = {2004},
  volume     = {106},
  number     = {1},
  pages      = {123--143},
  issn       = {0097-3165},
  doi        = {10.1016/j.jcta.2004.01.006},
  mrclass    = {05A05 (60C05)},
  mrnumber   = {2050120},
  mrreviewer = {Heinrich Niederhausen},
}

@Article{KohRS02,
  author     = {Kohayakawa, Yoshiharu and R\"{o}dl, Vojt\v{e}ch and Skokan, Jozef},
  title      = {Hypergraphs, quasi-randomness, and conditions for regularity},
  journal    = {Journal of Combinatorial Theory, Series A},
  year       = {2002},
  volume     = {97},
  number     = {2},
  pages      = {307--352},
  issn       = {0097-3165},
  doi        = {10.1006/jcta.2001.3217},
  mrclass    = {05C65 (05C80)},
  mrnumber   = {1883869},
  mrreviewer = {A. G. Thomason},
}

@Article{ChuG92,
  author     = {Chung, F. R. K. and Graham, R. L.},
  title      = {Quasi-random subsets of {$Z_n$}},
  year       = {1992},
  volume     = {61},
  number     = {1},
  pages      = {64--86},
  issn       = {0097-3165},
  doi        = {10.1016/0097-3165(92)90053-W},
  journal   = {Journal of Combinatorial Theory, Series A},
  mrclass    = {05C80 (05D05)},
  mrnumber   = {1178385},
  mrreviewer = {A. G. Thomason},
}

@Article{ChuG91,
  author     = {Chung, F. R. K. and Graham, R. L.},
  title      = {Quasi-random tournaments},
  journal    = {Journal of Graph Theory},
  year       = {1991},
  volume     = {15},
  number     = {2},
  pages      = {173--198},
  issn       = {0364-9024},
  doi        = {10.1002/jgt.3190150206},
  mrclass    = {05C50 (05C20)},
  mrnumber   = {1106530},
  mrreviewer = {A. G. Thomason},
}

@Article{NagRS06,
  author     = {Nagle, B. and R\"odl, V. and Schacht, M.},
  title      = {The counting lemma for regular k-uniform hypergraphs},
  journal = {Random Structures \& Algorithms},
  year       = {2006},
  volume     = {28},
  pages      = {113--179},
  }

@Article{RodS04,
  author     = {R\"odl, V. and Skokan, J.},
  title      = {Regularity lemma fork-uniform hypergraphs},
  journal = {Random Structures \& Algorithms},
  year       = {2004},
  volume     = {25},
  pages      = {1--42},
  }

@Article{ChuG91s,
  author     = {Chung, F. R. K. and Graham, R. L.},
  title      = {Quasi-random set systems},
  year       = {1991},
  volume     = {4},
  number     = {1},
  pages      = {151--196},
  issn       = {0894-0347},
  doi        = {10.2307/2939258},
  journal   = {Journal of the American Mathematical Society},
  mrclass    = {05C65 (03C13 05C80)},
  mrnumber   = {1077279},
  mrreviewer = {A. G. Thomason},
}

@Article{ChuG90,
  author     = {Chung, F. R. K. and Graham, R. L.},
  title      = {Quasi-random hypergraphs},
  journal = {Random Structures \& Algorithms},
  year       = {1990},
  volume     = {1},
  number     = {1},
  pages      = {105--124},
  issn       = {1042-9832},
  doi        = {10.1002/rsa.3240010108},
}

@Article{HavT89,
  author     = {Haviland, Julie and Thomason, Andrew},
  title      = {Pseudo-random hypergraphs},
  year       = {1989},
  volume     = {75},
  number     = {1-3},
  pages      = {255--278},
  issn       = {0012-365X},
  note       = {Graph theory and combinatorics (Cambridge, 1988)},
  doi        = {10.1016/0012-365X(89)90093-9},
  journal    = {Discrete Mathematics},
  mrclass    = {05C80 (05C65 60C05)},
  mrnumber   = {1001401},
  mrreviewer = {Zbigniew Palka},
}

@Article{CorR17,
  author     = {Coregliano, Leonardo Nagami and Razborov, Alexander A.},
  title      = {On the density of transitive tournaments},
  journal    = {Journal of Graph Theory},
  year       = {2017},
  volume     = {85},
  number     = {1},
  pages      = {12--21},
  issn       = {0364-9024},
  doi        = {10.1002/jgt.22044},
  mrclass    = {05C35 (05C20 60C99)},
  mrnumber   = {3634471},
  mrreviewer = {Jonathan A. Noel},
}

@article{Sid93,
  author    = {Alexander Sidorenko},
  title     = {A correlation inequality for bipartite graphs},
  journal   = {Graphs and Combinatorics},
  volume    = {9},
  number    = {2-4},
  pages     = {201--204},
  year      = {1993}
}

@article {ConFS10,
    AUTHOR = {Conlon, David and Fox, Jacob and Sudakov, Benny},
     TITLE = {An approximate version of {S}idorenko's conjecture},
   JOURNAL = {Geometric and Functional Analysis},
    VOLUME = {20},
      YEAR = {2010},
    NUMBER = {6},
     PAGES = {1354--1366},
      ISSN = {1016-443X}
}

@article{ConKLL18,
  title={Some advances on {S}idorenko's conjecture},
  author={Conlon, David and Kim, Jeong Han and Lee, Choongbum and Lee, Joonkyung},
  journal={Journal of the London Mathematical Society},
  volume={98},
  number={3},
  pages={593--608},
  year={2018},
  publisher={Wiley Online Library}
}

@article{ConKLL18b,
  title={Sidorenko's conjecture for higher tree decompositions},
  author={Conlon, David and Kim, Jeong Han and Lee, Choongbum and Lee, Joonkyung},
  journal={preprint arXiv:1805.02238},
  year={2018}
}

@article{ConL17,
  title={Finite reflection groups and graph norms},
  author={Conlon, David and Lee, Joonkyung},
  journal={Advances in Mathematics},
  volume={315},
  pages={130--165},
  year={2017},
  publisher={Elsevier}
}

@article {ConL21,
    AUTHOR = {Conlon, David and Lee, Joonkyung},
     TITLE = {Sidorenko's conjecture for blow-ups},
   JOURNAL = {Discrete Analysis},
      YEAR = {2021},
     PAGES = {paper no. 2, 13pp},
}

@phdthesis{Fit18,
  title={Applications of entropy to extremal problems},
  author={Fitch, Matthew},
  year={2018},
  school={University of Warwick}
}

@article{GrzLLV22,
  title={On tripartite common graphs},
  author={Grzesik, Andrzej and Lee, Joonkyung and Lidick{\'y}, Bernard and Volec, Jan},
  journal={Combinatorics, Probability and Computing},
  volume={31},
  number={5},
  pages={907--923},
  year={2022},
  publisher={Cambridge University Press}
}

@article{Lee21,
  title={On some graph densities in locally dense graphs},
  author={Lee, Joonkyung},
  journal={Random Structures \& Algorithms},
  volume={58},
  number={2},
  pages={322--344},
  year={2021},
  publisher={Wiley Online Library}
}

@misc{Par14,
  author={Parczyk, Olaf},
  title={On {S}idorenko’s conjecture},
  year={2014},
  howpublished={Master’s thesis, Freie Universit{\"a}t, Berlin}
}

@article{Sze14,
  title={An information theoretic approach to {S}idorenko's conjecture},
  author={Szegedy, Balazs},
  journal={preprint arXiv:1406.6738},
  year={2014}
}

@article{LiS11,
  title={On the logarithimic calculus and {S}idorenko's conjecture},
  author = {Li, J.~L.~X. and Szegedy, Balazs},
  journal={preprint arXiv:1107.1153},
  year={2011}
}

@article {SkoT04,
    AUTHOR = {Skokan, Jozef and Thoma, Lubos},
     TITLE = {Bipartite subgraphs and quasi-randomness},
   JOURNAL = {Graphs and Combinatorics},
    VOLUME = {20},
      YEAR = {2004},
    NUMBER = {2},
     PAGES = {255--262},
      ISSN = {0911-0119}
}

@article {BlaR65,
    AUTHOR = {Blakley, G. R. and Roy, Prabir},
     TITLE = {A {H}\"older type inequality for symmetric matrices with
              nonnegative entries},
   JOURNAL = {Proceedings of the American Mathematical Society},
    VOLUME = {16},
      YEAR = {1965},
     PAGES = {1244--1245},
      ISSN = {0002-9939}
}

@article{BucLSS21,
  title={Tournament quasirandomness from local counting},
  author={Buci{\'c}, Matija and Long, Eoin and Shapira, Asaf and Sudakov, Benny},
  journal={Combinatorica},
  volume={41},
  number={2},
  pages={175--208},
  year={2021},
  publisher={Springer}
}

@article{CorPS19,
author={Coregliano, L. N. and Parente, R. F. and Sato, C. M.},
title={On the maximum density of fixed strongly connected subtournaments},
journal={Electronic Journal of Combinatorics},
volume={26},
year={2019},
pages={P1.44}
}

@book {Lov93,
    AUTHOR = {Lov\'{a}sz, L.},
     TITLE = {Combinatorial Problems and Exercises},
 PUBLISHER = {North-Holland Publishing Co., 2nd edition},
      YEAR = {1993}
}

@article{ChaKNPSV20,
          volume = {57},
          number = {4},
          author = {T. Chan and Daniel {Kr\'al'} and Jonathan A. Noel and Yanitsa Pehova and Maryam Sharifzadeh and Jan Volec},
           title = {Characterization of quasirandom permutations by a pattern sum},
       publisher = {John Wiley \& Sons, Inc.},
            year = {2020},
         journal = {Random Structures \& Algorithms},
           pages = {920--939}
}

@article{HanKKMPSV23,
  title={No additional tournaments are quasirandom-forcing},
  author={Hancock, Robert and Kabela, Adam and {Kr\'al'}, Daniel and Martins, Ta{\'\i}sa and Parente, Roberto and Skerman, Fiona and Volec, Jan},
  journal={European Journal of Combinatorics},
  volume={108},
  pages={103632},
  year={2023},
  publisher={Elsevier}
}

@article{NoeRS25,
  title={Forcing quasirandomness in a regular tournament},
  author={Noel, Jonathan A and Ranganathan, Arjun and Simbaqueba, Lina M},
  journal={preprint arXiv:2501.11675},
  year={2025}
}

@article{ChaY24,
  title={Kruskal--{K}atona-type problems via the entropy method},
  author={Chao, Ting-Wei and Yu, Hung-Hsun Hans},
  journal={Journal of Combinatorial Theory, Series B},
  volume={169},
  pages={480--506},
  year={2024},
  publisher={Elsevier}
}

@article{ChaY24b,
  title={A Purely Entropic Approach to the Rainbow Triangle Problem},
  author={Chao, Ting-Wei and Yu, Hung-Hsun Hans},
  journal={preprint arXiv:2407.14084},
  year={2024}
}

@article{ChaY24c,
  title={When entropy meets {T}ur{\'a}n: new proofs and hypergraph {T}ur{\'a}n results},
  author={Chao, Ting-Wei and Yu, Hung-Hsun Hans},
  journal={preprint arXiv:2412.08075},
  year={2024}
}

@article{ChaALY25,
  title={Edge inducibility via local directed graphs},
  author={Chao, Ting-Wei and Antonir, Asaf Cohen and Li, Anqi and Yu, Hung-Hsun Hans},
  journal={preprint arXiv:2509.24064},
  year={2025}
}

@article{KraLN24,
  title={Forcing quasirandomness with 4-point permutations},
  author={Kr{\'a}l', Daniel and Lee, J. and Noel, Jonathan A},
  journal={preprint arXiv:2407.06869},
  year={2024}
}

@article{Gal14,
      title={Three tutorial lectures on entropy and counting}, 
      author={David Galvin},
      year={2014},
      journal={preprint arXiv:1406.7872},
}

@misc{Gow15,
  title = {Entropy and {S}idorenko’s conjecture---after {S}zegedy},
  author = {Gowers, W. T.},  
  note = {{G}owers's {W}eblog},
  year={2015},
}
\end{document}